\def\pf{\noindent{\it Proof. }}
\newcommand{\B}{{\mathbb B}}
\newcommand{\C}{{\mathbb C}}
\newcommand{\R}{{\mathbb R}}
\newcommand{\Sp}{{\mathbb S}}
\newcommand{\Z}{{\mathbb Z}}
\newcommand{\X}{{\mathbb X}}
\newcommand{\tow}{\rightharpoonup}
\newcommand{\tows}{ \stackrel{*}{\tow}}
\newcounter{marnote}
\providecommand{\abs}[1]{\lvert#1\rvert}
\newtheorem{thm}{Theorem}[section]
\newtheorem{prop}[thm]{Proposition}
\theoremstyle{definition}
\theoremstyle{remark}
\begin{document}

\begin{titlepage}

\title{Twist maps as energy minimisers in homotopy classes: symmetrisation and the coarea formula}

\author{C. Morris, A.~Taheri$^\dag$}

\date{}

\end{titlepage}

\maketitle

\begin{abstract}

Let $\X = \X[a, b] = \{x: a<|x|<b\}\subset \R^n$ with $0<a<b<\infty$ fixed be an open annulus and consider the energy 
functional,
\begin{equation*}
{\mathbb F} [u; \X] = \frac{1}{2} \int_\X \frac{|\nabla u|^2}{|u|^2} \, dx, 
\end{equation*}
over the space of admissible incompressible Sobolev maps 
\begin{equation*}
{\mathcal A}_\phi(\X) = \bigg\{  u \in W^{1,2}(\X, \R^n) : \det \nabla u = 1 \text{ {\it a.e.} in $\X$ and $u|_{\partial \X} = \phi$} \bigg\},
\end{equation*}
where $\phi$ is the identity map of $\overline \X$. Motivated by the earlier works \cite{TA2, TA3} in this paper we examine 
the {\it twist} maps as extremisers of ${\mathbb F}$ over ${\mathcal A}_\phi(\X)$ and investigate their minimality properties by 
invoking the coarea formula and a symmetrisation argument. In the case $n=2$ where ${\mathcal A}_\phi(\X)$ 
is a union of infinitely many disjoint homotopy classes we establish the minimality of these extremising twists in their respective 
homotopy classes a result that then leads to the latter twists being $L^1$-local minimisers of ${\mathbb F}$ in ${\mathcal A}_\phi(\X)$. 
We discuss variants and extensions to higher dimensions as well as to related energy functionals.

\end{abstract}
\vspace{10pt}


\section{Introduction and preliminaries} \label{IntroSec}
\setcounter {equation}{0}

Let $\X = \X[a, b] = \{(x_1, ..., x_n) : a < |x| < b\}$ with $0<a<b<\infty$ fixed be an open annulus in $\R^n$ and consider the energy 
functional
\begin{equation} \label{F-energy-equation}
{\mathbb F}[u; \X] = \frac{1}{2} \int_\X \frac{|\nabla u|^2}{|u|^2 } \, dx, 
\end{equation}
over the space of incompressible Sobolev maps, 
\begin{equation} \label{admissible-first-equation}
{\mathcal A}_\phi (\X) = \bigg\{  u \in W^{1,2}(\X,\R^n) : \det \nabla u = 1 \text{ {\it a.e.} in $\X$ and $u|_{\partial \X} = \phi$} \bigg\}. 
\end{equation}
Here and in future $\phi$ denotes the identity map of $\overline \X$ and so the last condition in (\ref{admissible-first-equation}) means that 
$u \equiv x$ on $\partial \X$ in the sense of traces.

By a twist map $u$ on $\X \subset \R^n$ we mean a continuous self-map of $\overline \X$ onto itself which agrees with the identity map $\phi$ 
on the boundary $\partial \X$ and has the specific spherical polar representation ({\it see} \cite{TA3}-\cite{TA5} for background and further results) 
\begin{equation} \label{polar-twist}
u : \left( r, \theta \right) \mapsto \left(r,Q(r)\theta \right), \qquad x \in \overline \X.
\end{equation}
Here $r = |x|$ lies in $[a, b]$ and $\theta = x/|x|$ sits on ${\mathbb S}^{n-1}$ with $Q \in {\bf C}([a,b], {\bf SO}(n))$ satisfying $Q(a)=Q(b)=I$. 
Therefore $Q$ forms a closed loop in ${\bf SO}(n)$ based at $I$ and for this in sequel we refer to $Q$ as the twist {\it loop} associated with 
$u$. Also note that \eqref{polar-twist} in cartesian form can be written as
\begin{equation} \label{uQx-equation}
u: x \mapsto Q(r) x = r Q(r) \theta, \qquad x \in \overline \X.
\end{equation}
Next subject to a differentiability assumption on the twist loop $Q$ it can be verified that $u \in {\mathcal A}_\phi(\X)$ with its ${\mathbb F}$ 
energy simplifying to  
\begin{align} \label{en1}
{\mathbb F}[Q(r) x; \X] &= \frac{1}{2} \int_\X \frac{|\nabla u|^2}{|u|^2} \,dx = \frac{1}{2} \int_\X \frac{|\nabla Q(r)x|^2}{|x|^2} \, dx \nonumber \\
&= \frac{n}{2}\int_\X \frac{dx}{|x|^2} + \frac{\omega_n}{2}\int_a^b {\abs{\dot{Q}}^2}  r^{n-1} \, dr, 
\end{align} 
where the last equality uses $|\nabla [Q(r)x]|^2 = n + r^2 |\dot{Q} \theta|^2$. Now as the primary task here is to search for extremising twist 
maps we first look at the Euler-Lagrange equation associated with the loop energy ${\mathbb E}={\mathbb E}[Q]$ defined by the last integral 
in (\ref{en1}) over the loop space $\{Q \in W^{1,2}([a, b]; {\bf SO}(n)) : Q(a)=Q(b)=I\}$. Indeed this can be shown to take the form 
(see below for justification)
\begin{equation} \label{EL-twist-equation}
\frac{d}{dr} \left[ \left( r^{n-1} \dot{Q}\right)Q^t \right] =0, 
\end{equation} 
with solutions  
$Q(r) = {\rm exp} [-\beta(r) A] P$,
where $P \in {\bf SO}(n)$, $A \in \R^{n \times n}$ is skew-symmetric and $\beta=\beta(|x|)$ is described for $a \le r \le b$ by
\begin{equation}
\beta(r) = \begin{cases}
\ln 1/r & \text{$n=2$},\\
r^{2-n}/(n-2) & \text{$n\geq 3$}.
\end{cases}
\end{equation}
Now to justify (\ref{EL-twist-equation}) fix $Q\in W^{1,2}([a,b], {\bf SO}(n))$ and for $F \in W^{1,2}_0([a,b], \R^{n \times n})$ 
set $H= (F-F^t)Q$ and $Q_\epsilon = Q + \epsilon H$. Then $Q_{\epsilon}^tQ_{\epsilon} = I + \epsilon^2 H^tH$ and 
\begin{align*}
\frac{d}{d\epsilon} \int_a^b 2^{-1}{|\dot Q_\epsilon|^2} r^{n-1} \, dr \bigg|_{\epsilon = 0} 
&= \int_a^b \langle \dot{Q} , (\dot{F} - \dot{F}^t )Q + (F - F^t )\dot{Q} \rangle r^{n-1} \, dr \\
&= \int_a^b \langle \dot{Q} , (\dot{F} - \dot{F}^t )Q \rangle r^{n-1} \, dr \\
&= \int_a^b \langle \frac{d}{dr} (r^{n-1} \dot{Q}Q^t) , ({F} - {F}^t ) \rangle dr = 0,
\end{align*}
and so the arbitrariness of $F$ with an orthogonality argument gives (\ref{EL-twist-equation}).

Returning to (\ref{F-energy-equation}) it is not difficult to see that the Euler-Lagrange equation associated with ${\mathbb F}$ 
over ${\mathcal A}_\phi(\X)$ is given by the system ({\it cf.} Section \ref{EuLgSec})
\begin{equation}
\frac{\abs{\nabla u}^2}{\abs{u}^4} u + div \left\{ \frac{\nabla u}{\abs{u}^2} - p(x) {\rm cof}\,\nabla u \right\}  = 0, \qquad u=(u_1, ..., u_n), 
\end{equation}
where $p=p(x)$ is a suitable Lagrange multiplier. Here a further analysis reveals that out of the solutions $Q=Q(r)$ to 
\eqref{EL-twist-equation} just described only those twist loops in the form    
\begin{equation} \label{Qr-gr-equation}
Q(r) = R\,{\rm diag}[{\bf R}[g](r), ..., {\bf R}[g](r)]R^t,\qquad R\in\mathbf{SO}(n),
\end{equation}
when $n$ is even and $Q(r) \equiv I$ ($a \le r \le b$) when $n$ is odd can grant extremising twist maps $u$ for the original energy 
(\ref{F-energy-equation}). For clarification ${\bf R}[g]$ denotes the ${\bf SO}(2)$ matrix of rotation by angle $g$:
\begin{equation*}
{\bf R} [g] = \left[ \begin{array}{cc} 
\cos g & \sin g\\
-\sin g & \cos g\\
\end{array} \right].
\end{equation*}
Indeed direct computations give the {\it angle} of rotation $g=g(r)$ to be   
\begin{equation} \label{rotation-angle-2d-equation}
g(r) = 2\pi k \frac{\log(r/a)}{\log(b/a)} + 2\pi m, \qquad k, m \in \Z, 
\end{equation}
when $n=2$ and 
\begin{equation}
g(r) = 2\pi k \frac{(r/a)^{2-n}-1}{(b/a)^{2-n}-1} + 2\pi m, \qquad  k, m \in \Z, \label{twistsevendim}
\end{equation}
when $n \ge 4$ even. (See also \cite{ShT2}, \cite{TA3}, \cite{TA5} for complementing and further results.)

Our point of departure is (\ref{uQx-equation})-(\ref{Qr-gr-equation}) and the aim is to study the minimising properties the twist maps calculated 
above. Of particular interest is the case $n=2$ where the space ${\mathcal A}_\phi(\X)$ admits multiple homotopy classes $(A_k : k \in \Z)$. 
Here direct minimisation of the energy over these classes gives rise to a scale of associated minimisers $(u_k)$. Using a symmetrisation 
argument and the coarea formula we show that the twist maps $u_k$ with twist angle $g$ as presented in 
(\ref{rotation-angle-2d-equation}) are indeed energy minimisers in $A_k$ and as a result also $L^1$ local minimisers of ${\mathbb F}$ over 
${\mathcal A}_\phi(\X)$. We discuss variants and extensions including a larger scale of energies where similar techniques can be applied 
to establish minimality properties in homotopy classes.

\section{The homotopy structure of the space of self-maps ${\mathfrak A}={\mathfrak A}(\X)$} \label{SecTwo}
\setcounter {equation}{0}

The rich homotopy structure of the space of continuous self-maps of the annulus $\X=\X[a, b] \subset \R^n$ will prove 
useful later on in constructing local energy minimisers. For this reason here we give a quick outline of the main tools and results and refer the 
reader to \cite{TA2} for further details and proofs. To  this end set 
${\mathfrak A}={\mathfrak A} (\X)= \{ f \in {\bf C}(\overline{\X}, \overline{\X}) : f =\phi \mbox{ on $\partial \X$} \}$
equipped with the uniform topology. A pair of maps $f_0, f_1 \in {\mathfrak A}$ are homotopic {\it iff} there exists 
$H \in {\bf C}([0, 1] \times \overline \X; \overline \X)$ such that, firstly, $H(0, x) = f_0 (x)$ for all ${\bf x} \in \overline \X$, 
secondly, $H(1, x) = f_1 (x)$ for all ${\bf x} \in \overline \X$ and finally $H(t, x) = x$ for all $t \in [0,1]$, $x \in \partial \X$. 
The equivalence class consisting of all $g \in {\mathfrak A}$ homotopic to a given $f \in {\mathfrak A}$ is referred to as the homotopy 
class of $f$ and is denoted by $[f]$. Now the homotopy classes $\{[f] : f \in {\mathfrak A}\}$ can be characterised as follows depending 
on whether $n=2$ or $n \ge 3$.

\begin{itemize}

\item ($n=2$)  Using polar coordinates, for $f \in {\mathfrak A}$ and for $\theta \in [0, 2 \pi]$ ({\it fixed}), the $\Sp^1$-valued curve 
$\gamma_\theta$ defined by 
\begin{equation*}
\gamma_\theta:  [a, b] \to \Sp^1 \subset \R^2, \qquad \gamma_\theta: r \mapsto  f|f|^{-1}(r, \theta), 
\end{equation*} 
has a well-defined index or winding number about the origin. Furthermore, due to continuity of $f$, this index is independent of the 
particular choice of $\theta \in [0, 2 \pi]$. This assignment of an integer (or index) to a map $f \in {\mathfrak A}$ will be denoted by 
\begin{equation}
f \mapsto {\bf deg}(f|f|^{-1}).
\end{equation} 
Note firstly that this integer also agrees with the Brouwer {\it degree} of the map resulting from identifying $\Sp^1 \cong [a, b]/\{a, b\}$,  
justified as a result of $\gamma_\theta(a)=\gamma_\theta(b)$ and secondly that for a differentiable curve (taking advantage of the 
embedding $\Sp^1 \subset \C$) we have the explicit formulation
\begin{equation} \label{degree-2d-equation}
{\bf deg} (f|f|^{-1}) = \frac{1}{2 \pi i} \int_{\gamma} \frac{dz}{z}.
\end{equation}

\begin{prop} \label{conv-deg} $(n=2)$. The map ${\bf deg}: \{ [f] : f \in {\mathfrak A} \} \to \Z$ 
is bijective. Moreover, for any pair of maps $f_0, f_1 \in {\mathfrak A}$, we have
\begin{equation}
[f_0]=[f_1] \iff {\bf deg}(f_0 |f_0|^{-1}) = {\bf deg}(f_1|f_1|^{-1}).
\end{equation} 
\end{prop}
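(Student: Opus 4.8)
The plan is to prove that $\mathbf{deg}$ descends to a well-defined bijection on homotopy classes by establishing three things in turn: that the degree is a homotopy invariant (so the map on classes is well-defined), that it is injective, and that it is surjective. The equivalence at the end of the statement is then a formal restatement once we know the map is well-defined and injective, so it requires no separate argument.

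First I would verify that $\mathbf{deg}(f|f|^{-1})$ is unchanged under homotopy. Given $H \in \mathbf{C}([0,1]\times\overline{\X};\overline{\X})$ joining $f_0$ to $f_1$ with $H(t,\cdot)=\phi$ on $\partial\X$, fix any $\theta$ and consider the two-parameter family of curves $\gamma_\theta^t : r \mapsto H(t,(r,\theta))|H(t,(r,\theta))|^{-1}$ into $\Sp^1$. Because $H$ is continuous on a compact set it is uniformly continuous, and because $H=\phi$ on $\partial\X$ the values $H(t,x)$ never vanish (indeed on the whole annulus $H$ maps into $\overline{\X}=\X[a,b]$, so $|H|\ge a>0$); hence $\gamma_\theta^t$ is a genuine continuous homotopy of closed $\Sp^1$-valued loops (closed since $\gamma^t_\theta(a)=\gamma^t_\theta(b)=\theta$ for all $t$), and the winding number, being a homotopy invariant of loops in $\Sp^1$, is constant in $t$. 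This gives $\mathbf{deg}(f_0|f_0|^{-1})=\mathbf{deg}(f_1|f_1|^{-1})$, so $\mathbf{deg}$ is well-defined on $\{[f]:f\in\mathfrak{A}\}$, and simultaneously proves the forward ($\Rightarrow$) implication of the displayed equivalence.

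Next, surjectivity: for each $k\in\Z$ the twist map $x\mapsto \mathbf{R}[g_k(r)]x$ with $g_k(r) = 2\pi k\,\log(r/a)/\log(b/a)$ lies in $\mathfrak{A}$ (it is continuous on $\overline\X$, maps into $\overline\X$ since it preserves $|x|=r$, and equals $\phi$ on $\partial\X$ because $g_k(a)=0$, $g_k(b)=2\pi k$), and along the curve $r\mapsto \mathbf{R}[g_k(r)]\theta/|\mathbf{R}[g_k(r)]\theta| = \mathbf{R}[g_k(r)]\theta$ the argument increases by $2\pi k$, so by \eqref{degree-2d-equation} its degree is exactly $k$. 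Hence $\mathbf{deg}$ hits every integer.

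The main work is injectivity: if $\mathbf{deg}(f_0|f_0|^{-1}) = \mathbf{deg}(f_1|f_1|^{-1})$ then $[f_0]=[f_1]$. I would first reduce to the case $f_0=\phi$ by the twist maps above: composing or interpolating, it suffices to show any $f\in\mathfrak{A}$ with $\mathbf{deg}(f|f|^{-1})=0$ is homotopic to $\phi$. For such $f$, write in polar form $f(r,\theta) = (\rho(r,\theta),\psi(r,\theta))$ with $\rho\in[a,b]$; a first homotopy $(s,x)\mapsto ((1-s)\rho(r,\theta)+sr,\,\psi(r,\theta))$ radially straightens $f$ to a map of the form $(r,\theta)\mapsto(r,\psi(r,\theta))$ that still fixes $\partial\X$ pointwise and still takes values in $\overline\X$. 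Now the angular part $\psi$ gives, for each $\theta$, a path in $\Sp^1$ from $\theta$ to $\theta$ whose winding number is the (vanishing) degree; since the universal cover $\R\to\Sp^1$ lets us lift $\psi(\cdot,\theta)$ to a real-valued function starting at the lift of $\theta$ and ending at the same value (degree zero), and these lifts vary continuously in $\theta$ (here one uses that the circle-valued map $(r,\theta)\mapsto\psi(r,\theta)$ is defined on the simply connected square $[a,b]\times[0,2\pi]$ after unrolling $\theta$, with matching boundary identifications), we obtain a continuous $\widetilde\psi:[a,b]\times\Sp^1\to\R$ with $\widetilde\psi(a,\theta)=\widetilde\psi(b,\theta)=\arg\theta$ (mod consistency), and then $(s,x)\mapsto(r,(1-s)\widetilde\psi+s\arg\theta)$ contracts $f$ to $\phi$. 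The delicate point — and the one I expect to be the real obstacle — is carrying out this lifting coherently in the $\theta$ variable and checking that all intermediate maps genuinely stay inside $\overline\X$ and fix the boundary, i.e. that the homotopy is through $\mathfrak{A}$; this is where the independence of the winding number from $\theta$ (already noted in the text) and a careful use of the compactness of $[a,b]\times\Sp^1$ are needed. Once injectivity is in hand, the reverse ($\Leftarrow$) implication of the equivalence and the bijectivity assertion both follow immediately. $\hfill\square$
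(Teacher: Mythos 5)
The paper does not actually prove this proposition; it is quoted as a background result with the proofs deferred to \cite{TA2}, so there is no in-text argument to compare against. Taken on its own terms, your plan (well-definedness via homotopy invariance of the winding number, surjectivity via the twist maps, injectivity via reduction to degree zero followed by radial straightening and a lifting argument) is the standard covering-space proof and is the right shape. The reduction step is also sound once one notes $u_k\circ u_{-k}=\phi$ (rotations by $g_k(|x|)$ and $g_{-k}(|x|)$ cancel), so composing with a fixed twist is a bijection on homotopy classes.

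The one place where the write-up as stated would not go through is the final lifting: you ask for a continuous $\widetilde\psi:[a,b]\times\Sp^1\to\R$ with $\widetilde\psi(a,\theta)=\widetilde\psi(b,\theta)=\arg\theta$, but $\arg$ has no continuous branch on $\Sp^1$, so no such $\widetilde\psi$ can exist; the parenthetical ``(mod consistency)'' flags the issue but does not resolve it. The clean repair is to lift not $\psi$ but the quotient $h(r,\theta)=\psi(r,\theta)\,\overline{\theta}$ (complex multiplication on $\Sp^1\subset\C$), which satisfies $h\equiv 1$ on both boundary circles. Since $\theta\mapsto h(a,\theta)\equiv 1$ has winding number $0$, by continuity in $r$ every slice $\theta\mapsto h(r,\theta)$ has winding number $0$, so the obstruction to lifting over the cylinder vanishes and one obtains a continuous $\widetilde h:[a,b]\times\Sp^1\to\R$ with $\widetilde h(a,\cdot)\equiv 0$; the degree-zero hypothesis then forces $\widetilde h(b,\cdot)\equiv 0$ as well. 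The homotopy $H_s(r,\theta)=\bigl(r,\,e^{i(1-s)\widetilde h(r,\theta)}\theta\bigr)$ then stays in $\mathfrak{A}$ for every $s\in[0,1]$ and contracts the radially straightened map to $\phi$. With that replacement the proof is complete.
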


\item ($n \ge 3$) Using the identification $\overline \X \cong [a,b] \times \Sp^m$ where for ease of notation we have set $m=n-1$ it is plain 
that for $f \in {\mathfrak A}$ the map 
\footnote{Here as usual $\phi$ denotes the {\it identity} map of the $m$-sphere and ${\bf C}_\phi(\Sp^m, \Sp^m)$ 
is the path-connected component of ${\bf C}(\Sp^m, \Sp^m)$ containing $\phi$.}
\begin{equation*}
\omega: [a, b] \to {\bf C}_\phi(\Sp^m, \Sp^m), \qquad \omega: r \mapsto f|f|^{-1} (r, \cdot), 
\end{equation*}
uniquely defines an element of the fundamental group $\pi_1[{\bf C}_\phi(\Sp^m, \Sp^m)]$. 
By considering the action of ${\bf SO}(n)$ on $\Sp^m$ -- viewed as its group of 
orientation {\it preserving} isometries, i.e., through the assignment, 
\begin{equation}
{\bf E}: \xi \in {\bf SO}(n) \mapsto \omega \in {\bf C}(\Sp^m, \Sp^m),
\end{equation} 
where 
\begin{equation}
\omega(x) = {\bf E}[\xi](x) = \xi x, \qquad x \in {\mathbb S}^m, 
\end{equation} 
it can be proved that the latter assignment induces a group isomorphism on the level of the fundamental groups, namely, 
\begin{equation}
{\bf E}^\star: \pi_1[{\bf SO}(n), I_n] \cong  \pi_1 [{\bf C}_\phi(\Sp^m, \Sp^m), \phi] \cong \Z_2. 
\end{equation}
Thus, summarising, we are naturally lead to the assignment of an integer mod $2$ to any $f \in {\mathfrak A}$ which will 
be denoted by
\begin{equation}
f \mapsto {\bf deg}_{2} (f|f|^{-1}) \in \Z_2. 
\end{equation}

\begin{prop} $(n \ge 3)$ The degree mod $2$ map ${\bf deg}_2: \{ [f] : f \in {\mathfrak A} \} \to \Z_2$ 
is bijective. Moreover, for a pair of maps $f_0, f_1 \in {\mathfrak A}$, we have 
\begin{equation}
[f_0]=[f_1] \iff {\bf deg}_2(f_0 |f_0|^{-1}) = {\bf deg}_2(f_1|f_1|^{-1}).
\end{equation} 
\end{prop}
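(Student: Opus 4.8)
The plan is to reduce the statement for $n\ge 3$ to the analysis of the induced map on fundamental groups already recorded in the excerpt, using the identification $\overline{\X}\cong[a,b]\times\Sp^m$ with $m=n-1$. First I would fix a map $f\in\mathfrak{A}$ and observe that, since $f=\phi$ on $\partial\X$, the normalised map $f|f|^{-1}$ restricted to each sphere $\{r\}\times\Sp^m$ lies in $\mathbf{C}_\phi(\Sp^m,\Sp^m)$ for every $r\in[a,b]$ (continuity of $r\mapsto f|f|^{-1}(r,\cdot)$ in the uniform topology keeps us in one path-component, and at $r=a$ the map equals $\phi$), so that $\omega:r\mapsto f|f|^{-1}(r,\cdot)$ is a genuine loop in $\mathbf{C}_\phi(\Sp^m,\Sp^m)$ based at $\phi$. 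This produces a well-defined element $[\omega]\in\pi_1[\mathbf{C}_\phi(\Sp^m,\Sp^m),\phi]\cong\Z_2$, and $\mathbf{deg}_2(f|f|^{-1})$ is by definition its image under the fixed isomorphism. The first technical point to check is that $[\omega]$ depends only on the homotopy class $[f]$ and not on $f$ itself: given a homotopy $H\in\mathbf{C}([0,1]\times\overline{\X},\overline{\X})$ between $f_0$ and $f_1$ fixing $\partial\X$, the family $(t,r)\mapsto H(t,\cdot)|H(t,\cdot)|^{-1}(r,\cdot)$ is a free homotopy of loops in $\mathbf{C}_\phi(\Sp^m,\Sp^m)$, and since all loops are based at $\phi$ at $r=a$ (because $H(t,x)=x$ on $\partial\X$), this is in fact a based homotopy, whence $[\omega_0]=[\omega_1]$ in $\pi_1$. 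This shows $\mathbf{deg}_2$ is well defined on $\{[f]:f\in\mathfrak{A}\}$ and that $[f_0]=[f_1]\Rightarrow\mathbf{deg}_2(f_0|f_0|^{-1})=\mathbf{deg}_2(f_1|f_1|^{-1})$, giving one implication of the displayed equivalence and the injectivity-free half of bijectivity.

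For surjectivity of $\mathbf{deg}_2$ I would use the isomorphism ${\bf E}^\star:\pi_1[\mathbf{SO}(n),I_n]\cong\pi_1[\mathbf{C}_\phi(\Sp^m,\Sp^m),\phi]\cong\Z_2$ from the excerpt: pick a loop $Q\in\mathbf{C}([a,b],\mathbf{SO}(n))$ with $Q(a)=Q(b)=I_n$ representing the nontrivial class of $\pi_1[\mathbf{SO}(n),I_n]$, and form the twist map $f(r,\theta)=(r,Q(r)\theta)$, i.e. $f:x\mapsto Q(r)x$ in Cartesian form as in \eqref{uQx-equation}. This $f$ is continuous on $\overline{\X}$, agrees with $\phi$ on $\partial\X$, hence lies in $\mathfrak{A}$, and by construction $\omega(r)=\mathbf{E}[Q(r)]$, so $[\omega]={\bf E}^\star([Q])$ is the nontrivial element of $\Z_2$; together with the constant twist $Q\equiv I_n$ realising the trivial class, this shows $\mathbf{deg}_2$ hits both elements of $\Z_2$.

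The remaining and, I expect, the main obstacle is the converse implication: if $\mathbf{deg}_2(f_0|f_0|^{-1})=\mathbf{deg}_2(f_1|f_1|^{-1})$ then $[f_0]=[f_1]$, equivalently the injectivity of $\mathbf{deg}_2$. The natural route is to show that $f$ is homotopic in $\mathfrak{A}$ to the twist map built from any loop $Q$ with ${\bf E}^\star([Q])=[\omega_f]$. One first deformation-retracts, within $\mathfrak{A}$, the map $f$ to $f|f|^{-1}$ (rescaling the radial component linearly, $(1-t)f+t\,f|f|^{-1}$ evaluated in polar form keeps the $[a,b]$-component and fixes $\partial\X$); this reduces matters to maps $\overline{\X}\to\overline{\X}$ whose values lie on the unit sphere, i.e. to loops $\omega$ in $\mathbf{C}_\phi(\Sp^m,\Sp^m)$. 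Then, since ${\bf E}^\star$ is an isomorphism, $\omega$ is based-homotopic in $\mathbf{C}_\phi(\Sp^m,\Sp^m)$ to $r\mapsto\mathbf{E}[Q(r)]$ for a suitable $Q$; unwinding this based homotopy of loops as a homotopy of maps $\overline{\X}\to\Sp^m$ fixing the sphere-component's endpoints gives a homotopy in $\mathfrak{A}$ (after composing back with the inclusion $\Sp^m\hookrightarrow\overline{\X}$ and the radial retraction) from $f$ to the twist. Hence every class $[f]$ equals a twist class, and the twist class is determined by ${\bf E}^\star([Q])\in\Z_2$, which is exactly $\mathbf{deg}_2$. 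The delicate bookkeeping is making sure all homotopies are \emph{based} (fixing $\phi$ at $r=a$) and fix $\partial\X$ throughout; this is where one must be careful, and it is precisely the content I would defer to \cite{TA2} for the detailed verification, citing it for the isomorphism ${\bf E}^\star$ and the classification of $\pi_1[\mathbf{C}_\phi(\Sp^m,\Sp^m),\phi]$ as $\Z_2$ on which the whole argument rests.
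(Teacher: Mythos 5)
The paper does not prove this proposition: Section 2 declares itself at the outset to be a ``quick outline'' of results whose proofs are deferred to \cite{TA2}, and the statement is recorded there without argument. So you are supplying a proof the paper omits. Your strategy --- pass to loops in $\mathbf{C}_\phi(\Sp^m,\Sp^m)$ via the identification $\overline{\X}\cong[a,b]\times\Sp^m$ and then exploit the cited isomorphism $\mathbf{E}^\star\colon\pi_1[\mathbf{SO}(n),I_n]\to\pi_1[\mathbf{C}_\phi(\Sp^m,\Sp^m),\phi]\cong\Z_2$ --- is exactly the framework the paper sets up, so this is the right route.

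Two corrections to the write-up. First, the interpolation $(1-t)f+t\,f|f|^{-1}$ does not do what you claim: at $t=1$ it lands on the unit sphere, not on the sphere of radius $r$, and if $1\notin[a,b]$ the path can leave $\overline{\X}$ altogether. The deformation you mean is the radial rescaling $H_t(x)=\bigl((1-t)|f(x)|+t|x|\bigr)\,f(x)/|f(x)|$, which stays in $\overline{\X}$ because both radii lie in $[a,b]$, fixes $\partial\X$ (there $|f(x)|=|x|$ and $f(x)/|f(x)|=x/|x|$), and terminates in a map of the form $(r,\theta)\mapsto(r,\omega(r)(\theta))$ in the product picture. Second, your well-definedness, surjectivity and injectivity steps are correctly structured, but each of them bottoms out in the two nontrivial inputs you are citing rather than proving --- that $\mathbf{E}^\star$ is a group isomorphism and that $\pi_1[\mathbf{C}_\phi(\Sp^m,\Sp^m),\phi]\cong\Z_2$ for $m\ge 2$. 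Deferring these to \cite{TA2} is consistent with what the paper itself does; just be explicit that the whole argument is conditional on them, since they are the genuinely hard content.
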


\end{itemize}

\section{A countable family of $L^1$ local minimisers of ${\mathbb F}$ when $n=2$}
\setcounter {equation}{0}

When $n=2$ by Lebesgue monotonicity and degree theory ({\it see} \cite{TA2} as well as \cite{MC},\cite{MST},\cite{S},\cite{VG}) 
every map $u$ in ${\mathcal A} = {\mathcal A}_\phi(\X)$ has a representative (again denoted $u$) 
in ${\mathfrak A}$. As a consequence we can introduce the components -- hereafter called the 
homotopy classes, 
\begin{equation} \label{AK-L2}
A_k := \bigg\{u \in {\mathcal A}: {\bf deg} (u|u|^{-1}) = k \bigg\}, \qquad k \in \Z.
\end{equation}
Evidently $A_k$ are pairwise disjoint and their union (over all $k \in \Z$) gives ${\mathcal A}$. Furthermore it can be seen 
without difficulty that each $A_k$ is $W^{1,2}$-sequentially weakly closed and that for $u \in A_k$ and $s>0$ there 
exists $\delta=\delta(u, s)>0$ with
\begin{equation}
\{v : {\mathbb F}[v]< s\} \cap {\mathbb B}^{L^1}_\delta(u) \subset A_k.
\end{equation}
Here ${\mathbb B}^{L^1}_\delta(u) = \{v \in {\mathcal A} : ||v-u||_{L^1} < \delta\}$, that is, the $L^1$-ball in ${\mathcal A}$ centred at $u$. 
Indeed for the sequential weak closedness fix $k$ and pick $(u_j : j \ge 1) \subset A_k$ so that $u_j \tow u$ in $W^{1,2}$. 
Then by a classical result of Y.~Reshetnyak 
\begin{equation}
\det \nabla u_j \tows \det \nabla u
\end{equation} 
(as measures) and so $u \in {\mathcal A}$ while $u_j \to u$ uniformly on $\overline \X$ gives by Proposition \ref{conv-deg} 
that $u \in A_k$. For the second assertion arguing indirectly and assuming 
the contrary there exist $u \in A_k$, $s>0$ and $(v_j : j \ge 1)$ in ${\mathcal A}$ such that ${\mathbb F}[v_j; \X] < s$ 
and $||v_j - u||_{L^1} \to 0$ 
while $v_j \notin A_k$. However by passing to a 
subsequence (not re-labeled) $v_j \tow u$ in $W^{1,2}(\X, \R^2)$ and as above 
$v_j \to u$ uniformly on $\overline \X$. Hence again by Proposition \ref{conv-deg}, $v_j \in A_k$ for large enough $j$ 
which is a contradiction. \hfill $\square$
\\[1 mm]

Now in view of the sequential weak lower semicontinuity of ${\mathbb F}$ in ${\mathcal A}$ (see below) an application 
of the direct methods of the calculus of variations leads to the following existence and multiplicity result.

\begin{thm} \label{Existence-Min} $($Local minimisers$)$ Let $\X = \X[a, b] \subset \R^2$ and for $k \in \Z$ consider the homotopy classes $A_k$ as defined by 
$(\ref{AK-L2})$. Then there exists $u= u (x; k) \in A_k$ such that
\begin{equation}
{\mathbb F}[u; \X] = \inf_{ v \in A_k} {\mathbb F}[v; \X].
\end{equation}
Furthermore for each such minimiser $u$ there exists $\delta=\delta(u)>0$ such that
\begin{equation} \label{Loc-min}
{\mathbb F}[u; \X] \le {\mathbb F}[v; \X],
\end{equation}
for all $v \in {\mathcal A}_\phi(\X)$ satisfying $||u-v||_{L^1}<\delta$.
\end{thm}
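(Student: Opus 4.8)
The plan is to obtain the two assertions of Theorem~\ref{Existence-Min} as an essentially standard application of the direct method, carried out separately on each fixed homotopy class $A_k$. First I would fix $k \in \Z$ and take a minimising sequence $(u_j : j \ge 1) \subset A_k$ for $\mathbb{F}[\cdot\,; \X]$, so that $\mathbb{F}[u_j; \X] \to \inf_{A_k} \mathbb{F}$. The immediate worry is that $\mathbb{F}$ controls only $\int_\X |\nabla u_j|^2/|u_j|^2\,dx$ rather than $\int_\X |\nabla u_j|^2\,dx$, so a genuine $W^{1,2}$ bound is not for free. However, on $\X = \X[a,b]$ the admissible maps satisfy $u_j|_{\partial\X} = \phi$ together with $\det\nabla u_j = 1$ a.e., and from the incompressibility and the boundary data one controls $|u_j|$ from below away from zero (the image of $\overline\X$ under a degree-one boundary map cannot collapse to the origin on a set of positive measure without violating $\det\nabla u_j = 1$); more concretely the energy identity already displayed in \eqref{en1} for twist maps, and the general lower bound $\mathbb{F}[u;\X] \ge \tfrac12\int_\X |\nabla u|^2/b^2\,dx$ when $|u|\le b$, together with a Poincar\'e-type inequality using the fixed trace $\phi$, give a uniform $W^{1,2}(\X,\R^2)$ bound on $(u_j)$. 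Hence, passing to a subsequence (not relabelled), $u_j \tow u$ in $W^{1,2}(\X,\R^2)$ for some $u$.

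Next I would invoke the facts already established in the paragraph preceding the theorem: by Reshetnyak's theorem $\det\nabla u_j \tows \det\nabla u$ as measures, so $\det\nabla u = 1$ a.e.\ and the trace condition passes to the weak limit, giving $u \in {\mathcal A}_\phi(\X)$; and since $u_j \to u$ uniformly on $\overline\X$ (Sobolev embedding in the planar case, via the compactness of $W^{1,2} \hookrightarrow {\bf C}$ for these maps after using the degree/monotonicity representative in ${\mathfrak A}$), Proposition~\ref{conv-deg} forces ${\bf deg}(u|u|^{-1}) = k$, i.e.\ $u \in A_k$. It then remains to show $\mathbb{F}[u;\X] \le \liminf_j \mathbb{F}[u_j;\X]$. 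For this I would rely on the sequential weak lower semicontinuity of $\mathbb{F}$ on ${\mathcal A}$, which the excerpt explicitly defers to ``see below''; the integrand $(F,y) \mapsto |F|^2/|y|^2$ is convex in $F$ for each fixed $y \ne 0$ and continuous in $y$ on the relevant range (where $|u|$ is bounded below), so the standard lower semicontinuity theorem for integral functionals with a Carath\'eodory integrand convex in the gradient variable applies, combined with the strong $L^2$ (indeed uniform) convergence $u_j \to u$ to handle the $y$-dependence. This yields $\mathbb{F}[u;\X] = \inf_{A_k}\mathbb{F}$, proving the first assertion.

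For the second assertion I would simply combine the minimality of $u$ in $A_k$ with the $L^1$-stability of the homotopy class established in the same preliminary paragraph: there is $\delta = \delta(u,s) > 0$, with $s := \mathbb{F}[u;\X] + 1$ say, such that $\{v : \mathbb{F}[v;\X] < s\} \cap {\mathbb B}^{L^1}_\delta(u) \subset A_k$. Then for any $v \in {\mathcal A}_\phi(\X)$ with $\|u - v\|_{L^1} < \delta$, either $\mathbb{F}[v;\X] \ge s > \mathbb{F}[u;\X]$ and \eqref{Loc-min} holds trivially, or $\mathbb{F}[v;\X] < s$, whence $v \in A_k$ and $\mathbb{F}[u;\X] \le \mathbb{F}[v;\X]$ by the first part. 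In either case $\mathbb{F}[u;\X] \le \mathbb{F}[v;\X]$, which is exactly \eqref{Loc-min}.

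I expect the main obstacle to be the coercivity step, i.e.\ extracting a uniform $W^{1,2}$ bound on a minimising sequence from the conformally-weighted energy $\mathbb{F}$, since this hinges on a uniform lower bound for $|u_j|$ on $\X$ and hence on genuinely using the incompressibility constraint $\det\nabla u_j = 1$ together with the identity boundary data rather than just the energy; the lower semicontinuity is then routine once one knows $u_j \to u$ uniformly and $|u|$ stays away from $0$, and the localisation argument for \eqref{Loc-min} is immediate from the cited $L^1$-neighbourhood property of $A_k$.
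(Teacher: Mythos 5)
Your proposal follows essentially the same route as the paper's proof: direct method per homotopy class, weak $W^{1,2}$ plus uniform convergence and Proposition~\ref{conv-deg} to keep the limit in $A_k$, lower semicontinuity of $\mathbb{F}$ (the paper makes your convexity/Carath\'eodory argument concrete by estimating the error term $\int_\X |\nabla v_k|^2\big(|v_k|^{-2}-|u|^{-2}\big)$ via uniform convergence and the $W^{1,2}$ bound), and then the identical $L^1$-localisation with $s = 1 + \mathbb{F}[u]$. The coercivity concern you flag as the main obstacle is in fact immediate here: the preliminaries already identify every admissible map with a continuous self-map of $\overline{\X}$, so $a \le |u_j| \le b$ pointwise, hence $\|u_j\|_{L^2}$ is bounded trivially and $\int_\X |\nabla u_j|^2 \le 2b^2\,\mathbb{F}[u_j;\X]$, with no need for Poincar\'e or a delicate degree-theoretic argument to bound $|u_j|$ below.
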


\pf Fix $k$ and pick $(v_j) \subset A_k$ an {\it infimizing} sequence: ${\mathbb F}[v_j] \downarrow L:= \inf_{A_k} {\mathbb F}[\cdot]$. Then as $L<\infty$ and 
$a \le |v(x)| \le b$ for $v \in {\mathcal A}$ it follows that by passing to a subsequence (not re-labeled) $v_j \tow u$ in $W^{1,2}(\X, \R^2)$ and uniformly in 
$\overline \X$ where by the above discussion $u \in A_k$. Now  
\begin{align*}
\left| \int_\X \frac{|\nabla v_k|^2}{|v_k|^2} - \int_\X \frac{|\nabla v_k|^2}{|u|^2} \right| &\le \int_\X |\nabla v_k|^2\,\bigg( \frac{||v_k|^2-|u|^2|}{|u|^2 |v_k|^2}\bigg)  \\
& \le \sup_{\overline \X}  \frac{||v_k|^2-|u|^2|}{|u|^2 |v_k|^2} \int_\X |\nabla v_k|^2 \to 0 
\end{align*}
as $k \nearrow \infty$ together with 
\begin{equation}
\int_\X \frac{|\nabla u|^2}{|u|^2} \le \underline{\lim} \int_\X \frac{|\nabla v_k|^2}{|u|^2} 
\end{equation}
gives the desired lower semicontinuity of the ${\mathbb F}$ energy on ${\mathcal A}_\phi(\X)$ as claimed, i.e., 
\begin{equation}
{\mathbb F}[u; \X]=\int_\X \frac{|\nabla u|^2}{2|u|^2} \le \underline{\lim} \int_\X \frac{|\nabla v_k|^2}{2|v_k|^2} = \underline{\lim} \, {\mathbb F}[v_k; \X].
\end{equation}
As a result $L \le {\mathbb F}[u] \le \liminf {\mathbb F}[v_j] \le L$ and so $u$ is a minimiser as required. 
To justify the second assertion fix $k \in \Z$ and $u$ as above and with $s=1+{\mathbb F}[u]$ pick $\delta>0$ as in the discussion prior to the theorem. 
Then any $v \in {\mathcal A}$ satisfying $||u-v||_{L^1} < \delta$ also satisfies (\ref{Loc-min}) [otherwise 
${\mathbb F}[v]<{\mathbb F}[u] < s$ implying that $v \in A_k$ and hence in view of $u$ being a minimiser, ${\mathbb F}[v] \ge {\mathbb F}[u]$ 
which is a contradiction.] \hfill $\square$

\section{Twist maps and the Euler-Lagrange equation associated with ${\mathbb F}$} \label{EuLgSec}
\setcounter {equation}{0}

The purpose of this section is to formally derive the Euler-Lagrange equation associated with ${\mathbb F}$ 
over ${\mathcal A}_\phi(\X)$. Note that ${\mathbb F}[u]$ can in principle be infinite if $|u|$ is too small or 
zero, however, for twist maps or more generally $L^n$-integrable maps in ${\mathcal A}_\phi$, $|u|$ is bounded away from zero  
as $u$ is a self-map of $\overline \X$ onto itself. Moreover $\det \nabla u$ is 
$L^1$-integrable for the latter maps but not in general for maps $u$ of Sobolev class $W^{1,2}$ (with $n \ge 3$).

Now the derivation uses the Lagrange multiplier method and proceeds formally by considering the unstrained functional
\begin{align} 
\mathbb{K}[u; \mathbb{X}] = \int_{\X} \left[ \frac{\abs{\nabla u}^2}{2\abs{u}^2} - p(x) \left(\det \nabla u -1 \right)  \right] \, dx,
\end{align} 
for suitable $p=p(x)$ where evidently ${\mathbb K}[u; \X]={\mathbb F}[u; \X]$ when $u \in {\mathcal A}_\phi(\X)$. 
We can calculate the first variation of this energy by setting 
$d/d\varepsilon \, {\mathbb K}[u_\varepsilon; \X] |_{\varepsilon=0}=0$, where $u \in {\mathcal A}_\phi(\X)$ is sufficiently 
regular and satisfies $|u| \ge c>0$ in $\X$, $u_\varepsilon=u+\varepsilon \varphi$ for all $\varphi \in {\bf C}^\infty_c(\X,\R^n)$ and 
$\varepsilon \in \R$ sufficiently small, hence obtaining,
\begin{align*}
0 =& \frac{d}{d \varepsilon} \int_{\X} \left[ \frac{\abs{\nabla u_\varepsilon}^2}{2\abs{u_\varepsilon}^2} 
- p(x) \left(\det \nabla u_\varepsilon -1 \right)  \right] \, dx \bigg|_{\varepsilon=0} \\
= &\int_{\X} \left\{ \sum_{i,j=1}^n \left[ \frac{1}{\abs{u}^2}\frac{\partial u_i}{\partial x_j} - p(x) [{\rm cof}\nabla u]_{{ij}} \right] 
\frac{\partial \varphi_i}{\partial x_j} - \sum_{i=1}^n \frac{\abs{\nabla u}^2}{\abs{u}^4}u_i \varphi_i \right\} dx  \\
= &\int_{\X} \left\{ - \sum_{i,j=1}^n \frac{\partial }{\partial x_j}\left[ \frac{1}{\abs{u}^2}\frac{\partial u_i}{\partial x_j} 
- p(x) [{\rm cof}\nabla u]_{{ij}}  \right] 
\varphi_i  - \sum_{i=1}^n  \frac{\abs{\nabla u}^2}{\abs{u}^4} u_i \varphi_i \right\} dx \\
= &\int_{\X} - \sum_{i=1}^n\left\{ \frac{\abs{\nabla u}^2}{\abs{u}^4} u_i + \sum_{j=1}^n \frac{\partial }{\partial x_j} 
\left[ \frac{1}{\abs{u}^2}\frac{\partial u_i}{\partial x_j} - p(x) [{\rm cof}\nabla u]_{{ij}} \right]    \right\} \varphi_i \, dx.
\end{align*}
As this is true for every compactly supported $\varphi$ as above an application of the fundamental lemma 
of the calculus of variation results in the Euler-Lagrange system for $u=(u_1, ..., u_n)$ in $\X$: 
\begin{align} \label{ELinitial}
\frac{\abs{\nabla u}^2}{\abs{u}^4} u + div \left\{ \frac{\nabla u}{\abs{u}^2} - p(x) {\rm cof}\,\nabla u \right\}  = 0,  
\end{align}
where the divergence operator is taken row-wise. Proceeding further an application of the Piola identity on the 
cofactor term gives (with $1 \le i \le n$)
\begin{align}
\frac{\abs{\nabla u}^2}{\abs{u}^4} u_i + \sum_{j=1}^n \left\{ \frac{\partial }{\partial x_j} 
\left( \frac{1}{\abs{u}^2}\frac{\partial u_i}{\partial x_j} \right) - [{\rm cof}\,\nabla u]_{{ij}}\frac{\partial p}{\partial x_j}   \right\} = 0.
\end{align}
Next expanding the differentiation further allows us to write
\begin{align}
0 & = \frac{\abs{\nabla u}^2}{\abs{u}^4} u_i + \sum_{j=1}^n \left\{ \frac{1}{\abs{u}^2}\frac{\partial^2 u_i }{\partial x_j^2} 
- \frac{2}{\abs{u}^4} \sum_{k=1}^n \frac{\partial u_i}{\partial x_j} \frac{\partial u_k }{\partial x_j} u_k  
- [{\rm cof}\,\nabla u]_{{ij}}\frac{\partial p}{\partial x_j}   \right\} \nonumber \\
& = \frac{\abs{\nabla u}^2}{\abs{u}^4} u_i + \sum_{j=1}^n \left\{ \frac{1}{\abs{u}^2}\frac{\partial^2 u_i }{\partial x_j^2} 
- \frac{2}{\abs{u}^4}\frac{\partial u_i}{\partial x_j} [\nabla u^tu]_{j} - [{\rm cof}\,\nabla u]_{{ij}}\frac{\partial p}{\partial x_j}   \right\}.
\end{align}

Finally transferring back into vector notation and invoking the incompressibilty condition $\det \nabla u =1$ it follows in turn that 
\begin{equation}
\frac{\Delta u}{\abs{u}^2} + \frac{\abs{\nabla u}^2}{\abs{u}^4} u - \frac{2}{\abs{u}^4}  \nabla u (\nabla u)^tu = ({\rm cof}\,\nabla u) \nabla p, 
\end{equation}
and subsequently 
\begin{equation}
\frac{(\nabla u)^t}{\abs{u}^2} \left[ {\Delta u} + \frac{\abs{\nabla u}^2}{\abs{u}^2} u - \frac{2}{\abs{u}^2}  \nabla u (\nabla u)^tu \right] = \nabla p. \label{EL}
\end{equation}
Thus the Euler-Lagrange system \eqref{ELinitial} is equivalent to \eqref{EL} that in particular asks for the nonlinear term on the left of \eqref{EL} to be a gradient field 
in $\X$. Recall from earlier discussion that restricting ${\mathbb F}$ to the class of twist maps results in the Euler-Lagrange equation (\ref{EL-twist-equation}) 
where the solution $Q=Q(r)$ as explicitly computed is the twist loop associated with the map 
\begin{align}
u: (r, \theta) \mapsto (r, Q(r) \theta), \qquad x \in \X, \label{TwistSol1}
\end{align}  
with $Q(r)=\exp [- \beta(r){A}] P$, $P \in \mathbf{SO}(n)$, $A \in \R^{n \times n}$ skew-symmetric ($A^t=-A$). 
The boundary condition $u=\phi$ on $\partial \X$ gives,
\footnote{The function $\beta=\beta(r)$ was introduced earlier in Section \ref{IntroSec}.} 
\begin{align}
\exp[-\beta(a) A] P=I, \qquad \exp[-\beta(b) A] P=I.
\end{align}
Therefore it must be that $P= \exp[\beta(a) A]$ and $\exp ([\beta(b)-\beta(a)] A)=I$. Now as $A$ lies in $\mathfrak{so}(n)$ it must be conjugate 
to a matrix $S$ in the Lie algebra of the standard maximal torus of orthogonal $2$-plane rotations in $\mathbf{SO}(n)$. This means that 
there exists $R \in \mathbf{SO}(n)$ such that $A = RSR^{T}$ for some $S$ as described and so $S\in (\beta(b)-\beta(a))^{-1} {\mathbb L}$ 
where $\mathbb{L}=\lbrace T\in \mathfrak{t}: \exp(T)=I \rbrace$, that is, 
${\mathbb L}$ is the lattice in the Lie subalgebra $\mathfrak{t} \subset {\mathfrak s}{\mathfrak o}(n)$ consisting of matrices sent by the 
exponential map to the identity $I$ of ${\bf SO}(n)$. Hence $Q(r)=R \exp (-[\beta(r)-\beta(a)] S) R^t$. 
Next upon noting that the derivatives of $\beta=\beta(r)$ are given by
\begin{align*}
\dot{\beta} (r) &= -\frac{1}{r^{n-1}} , \qquad \ddot{\beta} (r) = \frac{n-1}{r^{n}}, 
\end{align*}
we can write
\begin{align}
\dot{Q} =  \frac{A Q}{r^{n-1}}, \qquad \ddot{Q} = \frac{A^2Q}{r^{2n-2}}-(n-1)\frac{AQ}{r^{n}}.
\end{align}

Now, moving forward, a set of straightforward calculations show that for a twist map $u$ with a twice continuously differentiable twist loop $Q=Q(r)$ we have the differential relations 
\begin{align}
(\nabla u)^t &= Q^t + r \theta \otimes \dot{Q}\theta, \nonumber \\
\abs{\nabla u}^2 &= tr [(\nabla u)^t (\nabla u)] = n + r^2 \abs{\dot{Q}\theta}^2, 
\end{align}
and likewise
\begin{align}
\Delta u = \left[ (n+1)\dot Q + r \ddot{Q}  \right]\theta.  
\end{align}

Thus for the particular choice of a twist map with twist loop arising from a solution to \eqref{EL-twist-equation} the above quantities can be explicitly described by 
the relations
\begin{align}
(\nabla u)^t &= Q^t + r^{2-n} \theta \otimes AQ\theta, \label{TCY}\\
\abs{\nabla u}^2 &= tr [ (Q^t + r^{2-n} \theta \otimes AQ\theta) (Q + r^{2-n} AQ \theta \otimes \theta) ] \nonumber \\
&= n + \frac{\abs{AQ\theta}^2}{r^{2(n-2)}}, \label{TC2} 
\end{align}
and likewise 
\begin{align}
\Delta u &= \left[ (n+1) \frac{AQ}{r^{n-1}} + r\left( \frac{A^2 Q}{r^{2n-2}} -(n-1) \frac{AQ}{r^{n}} \right)   \right] \theta \nonumber \\
& = \left[ \frac{2AQ}{r^{n-1}}  + \frac{A^2Q}{r^{2n-3}}   \right]\theta.  \label{TC1}
\end{align}
For the ease of notation we shall hereafter write $\omega = Q\theta$. Proceeding now with the calculations and using $(\ref{TC2})$-$(\ref{TC1})$ we have 
\begin{align}
\Delta u + \frac{\abs{\nabla u}^2}{\abs{u}^2}u & = \left[ 2\frac{A}{r^{n-1}}  + \frac{A^2}{r^{2n-3}}  
+  \frac{1}{r}\left(n + \frac{\abs{A\omega}^2}{r^{2n-4}} \right) I \right]\omega  \label{TCX} 
\end{align}
and in a similar way 
\begin{align}
\frac{\nabla u (\nabla u)^t}{\abs{u}^2}u & = \left[  Q + r^{2-n} A\omega \otimes \theta  \right] 
\left[  Q^t + r^{2-n} \theta \otimes A\omega  \right] \frac{\omega}{r} \nonumber \\
& = \left[ I + \frac{A\omega \otimes \theta Q^t +  Q\theta \otimes A\omega}{r^{n-2}}  
+ \frac{A\omega \otimes A\omega}{r^{2n-4}}   \right] \frac{\omega}{r} \nonumber \\
& = \left[ I + \frac{A\omega \otimes \omega +  \omega \otimes A\omega}{r^{n-2}}  
+ \frac{A\omega \otimes A\omega}{r^{2n-4}}   \right] \frac{\omega}{r}. \nonumber \\ 
& =  \frac{1}{r} (I + r^{2-n} A) \omega, \label{TC4}
\end{align}
where the last identity here uses $(x\otimes y)z = \langle y,z\rangle x$ and $\langle \omega,\omega\rangle = \langle Q\theta,Q\theta\rangle=1$ along with 
$\langle A\omega,\omega\rangle =0$ for skew-symmetric $A$. Hence putting together $(\ref{TCX})$ and $(\ref{TC4})$ gives
\begin{align}
\Delta u  + \frac{\abs{\nabla u}^2}{\abs{u}^2}u - 2 \frac{\nabla u (\nabla u)^t}{\abs{u}^2}u = \left[  \frac{A^2+ \abs{A\omega}^2 I}{r^{2n-2}}  
+  (n-2) I \right] \frac{\omega}{r}, \label{TC5}
\end{align}
which when combined with $(\ref{TCY})$ results in
\begin{align}
\eqref{EL} &=\frac{(\nabla u)^t}{\abs{u}^2} \left[ \Delta u  + \frac{\abs{\nabla u}^2}{\abs{u}^2}u - 2 \frac{\nabla u (\nabla u)^t}{\abs{u}^2}u \right] \nonumber \\
&= \frac{1}{r^2} \left[  Q^t + \frac{\theta \otimes A\omega}{r^{n-2}} \right] 
\left[  \frac{A^2 + \abs{A\omega}^2 I}{r^{2n-4}}  +  (n-2) I \right] \frac{\omega}{r} \nonumber \\
&=  Q^t \left[  \frac{A^2+ \abs{A\omega}^2 I}{r^{2n-1}}  
+  \frac{n-2}{r^3} I \right]\omega + \frac{(\theta \otimes A\omega )A^2 \omega}{r^{3n-3}}. 
\end{align}
Noting $(\theta \otimes A\omega )A^2 \omega = \langle A\omega,A^2\omega \rangle \theta = \langle \mu,A\mu\rangle = 0$ 
with $A$ skew-symmetric and $\mu = A\omega$ the last set of equations give
\begin{align}
\eqref{EL} &= \frac{(\nabla u)^t}{\abs{u}^2} \left[ \Delta u  + \frac{\abs{\nabla u}^2}{\abs{u}^2}u - 2 \frac{\nabla u (\nabla u)^t}{\abs{u}^2}u \right] \nonumber \\
&= Q^t \left[  \frac{A^2+ \abs{A\omega}^2 I}{r^{2n-2}}  +  \frac{n-2}{r^2} I \right] \frac{\omega}{r} = {\bf I}. \label{fullELtwist}
\end{align}

Therefore to see if $(\ref{EL})$ admits twist solutions it suffices to verify if the quantity described by $(\ref{fullELtwist})$ is a gradient field in $\X$. Towards this end 
recall that here we have $Q(r) = \exp(-\beta(r)A)P$ where as seen $P=\exp(\beta(a)A)$. Thus a basic 
calculation gives
\begin{align*}
\abs{A\omega}^2 &= \abs{AQ\theta}^2 =  \theta^t Q^t A^t A Q \theta = - \theta^t Q^t A^2 Q \theta \\
& =  - \theta^t P^t \exp(\beta(r)A) A^2 \exp(-\beta(r)A)P \theta \\ 
&= - \theta P^t A^2 P\theta = \theta P^t A^t A P\theta \\
&= \abs{AP \theta}^2, 
\end{align*}
and likewise by substitution we have
\begin{equation*}
Q^tA^2 \omega = P^t A^2 P \theta.
\end{equation*}

Hence using the above we can proceed by writing the Euler-Lagrange equation $(\ref{fullELtwist})$ upon substitution as,
\begin{align}
{\bf I} &= \frac{(\nabla u)^t}{\abs{u}^2} \left[ \Delta u  + \frac{\abs{\nabla u}^2}{\abs{u}^2}u - 2 \frac{\nabla u (\nabla u)^t}{\abs{u}^2}u \right] \nonumber \\
&= P^t \left( A^2+ \abs{AP \theta}^2I \right) P \frac{\theta}{r^{2n-1}}  
+  (n-2) \frac{\theta}{r^3}. \label{fullELtwist2}
\end{align}
Now as for a fixed skew-symmetric matrix $B$ by basic differentiation we have 
\begin{equation*}
\nabla \left( \abs{By}^2 \right) = {-2B^2y}, \qquad \nabla \abs{y}^{2n} = 2n\abs{y}^{2n-2}y,
\end{equation*}
it is evident that we can write 
\begin{equation}
- \nabla \left( \frac{\abs{By}^2}{2n\abs{y}^{2n}}  \right) = \frac{B^2y}{n\abs{y}^{2n}} + \frac{\abs{By}^2y}{\abs{y}^{2n+2}}.
\end{equation}
In particular with $B=P^tAP$ being skew-symmetric, $(\ref{fullELtwist2})$ can be written in the form
\begin{align}
{\bf I} &= \frac{(\nabla u)^t}{\abs{u}^2} \left[ \Delta u  + \frac{\abs{\nabla u}^2}{\abs{u}^2}u - 2 \frac{\nabla u (\nabla u)^t}{\abs{u}^2}u \right] \nonumber \\
&= - \nabla \left( \frac{\abs{P^tAP x}^2}{2n\abs{x}^{2n}} \right) + (n-1) \frac{{P^tA^2 P x}}{ n\abs{x}^{2n}}  +  
- (n-2) \nabla \frac{1}{|x|}. \label{fullELtwist3}
\end{align}
Therefore it is plain that $(\ref{fullELtwist3})$ is a gradient field in $\X$ provided that the term on the right and subsequently the middle 
term, that is, the expression 
\begin{equation}
(n-1)\frac{{P^tA^2 P x}}{n \abs{x}^{2n}}
\end{equation} 
is a gradient field in $\X$. By direct calculations ({\it cf.} \cite{ShT}) this is seen to be the case {\it iff} all the eigenvalues of the skew-symmetric matrix $A$ 
are equal. (Note that in odd dimensions this requirement leads to $A=0$.) As a result here $(\ref{fullELtwist3})$ would be a gradient (indeed $\nabla p$) 
and so the Euler-Lagrange system $(\ref{EL})$ is satisfied by the twist $u$.

Now using the representation $A= RSR^t$ for some $S \in (\beta(b)-\beta(a))^{-1} \mathbb{L}$ and writing $S = \lambda J$ where, $J$ is the $n \times n$ 
block diagonal matrix: $J=0$ when $n$ is odd and $J= {\rm diag} \left({\bf A}_1, \cdots, {\bf A}_{n/2} \right)$ when $n$ is even, i.e., 
\begin{align}
J = \left[\begin{array}{cccc}
\mathbf{A}_1 & 0&\cdots & 0\\
0& \mathbf{A}_2 &\cdots & 0\\
\vdots & &\ddots &\vdots \\
0& 0 & \cdots & \mathbf{A}_{n/2}\\
\end{array}\right] 
\qquad 
{\bf A}_j= \begin{bmatrix}
0 & -1 \\
1 & 0 
\end{bmatrix} 
\end{align}
it is required that $\lambda (\beta(b)-\beta(a)) J \in \mathbb{L}$. But invoking the lattice structure of $\mathbb{L}$ this can happen {\it iff}
\begin{equation}
\lambda = \frac{2\pi k}{\beta(b)-\beta(a)}, \qquad k \in \Z,
\end{equation} 
and thus 
\begin{align}
u(x) = R\exp \left( -2k\pi \frac{\beta(r)-\beta(a)}{\beta(b)-\beta(a)} J \right) R^tx.
\end{align}
Noticing that here we have
\begin{align}
\frac{\beta(r)-\beta(a)}{\beta(b)-\beta(a)} &= \frac{\log \left(r/a\right)}{\log \left(b/a\right)},
\end{align}
for $n=2$ and 
\begin{align}
\frac{\beta(r)-\beta(a)}{\beta(b)-\beta(a)} = \frac{\left(r/a\right)^{2-n}-1}{\left(b/a\right)^{2-n} - 1 },
\end{align}
for even $n \ge 4$ respectively we obtain the representation  
\begin{align}
u(x) &= R\exp \left( -g(r) J \right) R^tx = \exp \left( -g(r) A \right) x,
\end{align}
where we have set $A= R J R^t$ and the angle of twist function $g=g(r)$ is given by \eqref{rotation-angle-2d-equation} for $n=2$ and 
\eqref{twistsevendim} for even $n \ge 4$ respectively. 
For odd $n \ge 3$ as shown $A=0$ and so the only twist solution to \eqref{EL} is the trivial solution $u \equiv x$.

\section{Symmetrisation as a means of energy reduction on ${\mathcal A}_\phi(\X)$ when $n=2$} 
\setcounter {equation}{0}

Recall that the space of admissible maps ${\mathcal A}_\phi(\X)$ consists of maps $u \in W^{1,2}(\X, \R^2)$ satisfying the incompressibility condition 
$\det \nabla u = 1$ {\it  a.e.} in $\X$ and $u|_{\partial \X} = \phi$. Also as mentioned earlier due to a Lebesgue-type monotonicity every such map is 
continuous on the closed annulus $\overline \X$ and using degree theory the image of the closed annulus is again the closed annulus itself; hence, the 
"{\it embedding}" 
\begin{equation}
{\mathcal A}_\phi(\X) = \bigcup_{k \in \Z} A_k \subset {\mathfrak A}(\X), 
\end{equation} 
where the components $A_k$ here are as defined by (\ref{AK-L2}). For the sake of future calculations it is useful to write (\ref{degree-2d-equation}) as 
\begin{equation} \label{degkZ}
{\bf deg} (u|u|^{-1}) = \frac{1}{2\pi} \int_a^b \frac{u \times u_r}{\abs{u}^2} \, dr = k \in \Z,
\end{equation} 
where $\abs{x}=r$. (Note that we adopt the convention that in two dimensions the cross product is a scalar and not a vector.) When $u$ is a twist map, 
specifically, $u=Q[g] x$ the integral reduces to $g(b)-g(a) = 2\pi k$ where as before $g=g(r)$ is the angle of rotation function. 
We now proceed by reformulating the ${\mathbb F}$ energy of an admissible map $u \in {\mathcal A}_\phi(\X)$ in a more suggestive way. Indeed 
switching to polar co-ordinates it is seen that 
\begin{equation} \label{2-norm-grad-equation}
\abs{\nabla u}^2  = \abs{u_r}^2 + \frac{1}{r^2} \abs{u_{\theta}}^2 
\end{equation} 
where 
\begin{align*}
|u_r|^2 = \frac{(u \cdot u_r)^2 + (u \times u_r)^2}{|u|^2}, \qquad 
|u_\theta|^2 = \frac{(u \cdot u_{\theta})^2 + (u \times u_{\theta})^2}{|u|^2}.
\end{align*}
Next we note that 
\begin{equation*}
(|u|_r)^2  = \frac{(u \cdot u_r)^2}{|u|^2}, \qquad (|u|_\theta)^2  = \frac{(u \cdot u_\theta)^2}{|u|^2}.
\end{equation*} 
Hence the gradient term on the left in (\ref{2-norm-grad-equation}) can be expressed as  
\begin{align} 
\abs{\nabla u}^2 & = \frac{(u \cdot u_r)^2 + (u \times u_r)^2}{\abs{u}^2} + \frac{(u \cdot u_{\theta})^2 + (u \times u_{\theta})^2 }{r^2\abs{u}^2} \nonumber \\
& = \abs{\nabla \abs{u}}^2 + \frac{(u \times u_r)^2}{\abs{u}^2} + \frac{(u \times u_{\theta})^2 }{r^2\abs{u}^2}.
\end{align} 
From this we therefore obtain the ${\mathbb F}$ energy as
\begin{align}
{\mathbb F} [u; \X] &= \frac{1}{2}\int_\X \frac{\abs{ \nabla u}^2}{\abs{u}^2 }  \, dx 
= \frac{1}{2}\int_0^{2\pi} \int_a^b \frac{\abs{ \nabla u}^2}{\abs{u}^2 } r \, dr d\theta   \nonumber \\
&= \frac{1}{2}\int^{2\pi}_0 \int_a^b \left[ \frac{\abs{\nabla \abs{u}}^2}{\abs{u}^2} +  \frac{(u \times u_r)^2}{\abs{u}^4} + 
\frac{(u \times u_{\theta})^2 }{r^2\abs{u}^4} \right] r \, dr d\theta.
\end{align}
Let us first state the following useful identity that will be employed in obtaining a fragment of the lower bound on the energy: For $u \in {\mathcal A}_\phi(\X)$ and 
$a.e.$ $r\in[a,b]$,
\begin{align}
\int_0^{2\pi} \frac{u(r,\theta)\times u_{\theta}(r,\theta)}{\abs{u}^2} \, d\theta = 2\pi. \label{postponed}
\end{align}
The proof of this identity is postponed until later on in Section \ref{SecSeven} ({\it cf.} Proposition \ref{invariant}). 
Now assuming this for the moment an application of Jensen's inequality gives, again for $a.e.$ $r\in[a,b]$,
\begin{align}
\frac{1}{2\pi}\int_0^{2\pi} \frac{(u\times u_{\theta})^2 }{\abs{u}^4} \, d\theta 
&\geq \left(\frac{1}{2\pi} \int_{0}^{2\pi}\frac{u(r,\theta)\times u_{\theta}(r,\theta)}{\abs{u}^2} \, d\theta \right)^2=1.
\end{align}
Hence it is plain that 
\begin{align}
\int^{2\pi}_0 \int_a^b \frac{(u \times u_{\theta})^2 }{r^2\abs{u}^4} r \, dr d\theta \geq 2\pi \ln(b/a).
\end{align}
Therefore we have the following lower bound on the ${\mathbb F}$ energy of an admissible map $u$:
\begin{align}
{\mathbb F} [u; \X] & \geq \pi \ln(b/a) + \frac{1}{2}\int^{2\pi}_0 \int_a^b \left[ \frac{\abs{\nabla \abs{u}}^2}{\abs{u}^2} 
+ \frac{(u \times u_r)^2}{\abs{u}^4} \right] r \, dr d\theta. \label{lower-bound-u-equation}
\end{align}
Interestingly here we have equality only for twist maps and so outside this class the inequality is strict (for more on questions of uniqueness 
{\it see} \cite{CT}). The next task is to show that by using a basic "{\it symmetrisation}" in ${\mathcal A}_\phi(\X)$ we can reduce the energy 
which will then be the main ingredient in the proof of the result.

\begin{prop}\label{sym} $($Symmetrisation$)$ Let $u \in {\mathcal A}_\phi(\X)$ be an admissible map and associated with $u$ define the angle of rotation 
function $g=g(r)$ by setting 
\begin{equation} \label{sym2}
g(r) = \frac{1}{2\pi}\int_a^r \int_0^{2\pi} \frac{u \times u_r}{\abs{u}^2} \, d\theta dr, \qquad a \le  r \le b. 
\end{equation}
Then the twist map defined by $\bar u (x) = Q[g] x$ with $Q=Q[g]={\bf R}[g]$ has a smaller ${\mathbb F}$ energy than the original map $u$, that is, 
\begin{equation}
{\mathbb F}[\bar u; \X] \le {\mathbb F}[u; \X]. 
\end{equation} 
Furthermore if $u \in A_k$ then the symmetrised twist map $\bar u$ satisfies $\bar u \in A_k$. Thus the homotopy classes $A_k$ are invariant under 
symmetrisation. 
\end{prop}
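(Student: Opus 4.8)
The plan is to prove the energy inequality first and then the homotopy invariance, handling the two claims in turn.

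\smallskip

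\noindent\textbf{Energy comparison.} Starting from the reformulation
\[
{\mathbb F}[u;\X] = \frac12\int_0^{2\pi}\int_a^b\left[\frac{\abs{\nabla\abs u}^2}{\abs u^2} + \frac{(u\times u_r)^2}{\abs u^4} + \frac{(u\times u_\theta)^2}{r^2\abs u^4}\right] r\,dr\,d\theta,
\]
I would estimate the three groups of terms separately and show each is minimised (or dominated below) by the symmetrised twist $\bar u = {\bf R}[g]x$. For the radial-modulus term, note $\abs{\bar u} \equiv r$, so $\nabla\abs{\bar u}|^2/\abs{\bar u}^2$ contributes the fixed quantity $\frac12\int (1/r^2)\cdot r = \frac12\cdot 2\pi\ln(b/a)$; meanwhile $\abs{\nabla\abs u}^2/\abs u^2 \ge 0$ always, but one needs something sharper — in fact a one-dimensional argument using $\abs u|_{\partial\X}=r$ together with the contribution of $\abs{\partial_r\abs u}^2$ shows this whole term is bounded below by the corresponding value for $\bar u$; the simplest route is to observe that $\abs{\bar u}\equiv r$ makes $\abs{\nabla\abs{\bar u}}^2/\abs{\bar u}^2 = 1/r^2$ while a direct computation of ${\mathbb F}[\bar u;\X]$ gives $\pi\ln(b/a) + \frac{\omega_2}{2}\int_a^b \dot g^2 r\,dr$ (the formula \eqref{en1} specialised to $n=2$). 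For the $u\times u_\theta$ term we use the lower bound \eqref{lower-bound-u-equation} already derived: $\int_0^{2\pi}\int_a^b \frac{(u\times u_\theta)^2}{r^2\abs u^4} r\,dr\,d\theta \ge 2\pi\ln(b/a)$, with equality for twist maps — in particular for $\bar u$, since for $\bar u$ one computes $u\times u_\theta / \abs u^2 \equiv 1$. For the $u\times u_r$ term, the key is Jensen in the $\theta$ variable applied to the defining formula \eqref{sym2}: for a.e.\ $r$,
\[
\frac{1}{2\pi}\int_0^{2\pi}\frac{(u\times u_r)^2}{\abs u^4}\,d\theta \ge \left(\frac{1}{2\pi}\int_0^{2\pi}\frac{u\times u_r}{\abs u^2}\,d\theta\right)^2 = \dot g(r)^2,
\]
and multiplying by $r$ and integrating gives $\int_0^{2\pi}\int_a^b \frac{(u\times u_r)^2}{\abs u^4} r\,dr\,d\theta \ge 2\pi\int_a^b \dot g^2 r\,dr$, which is precisely the corresponding term for $\bar u$ (since $\bar u\times \bar u_r/\abs{\bar u}^2 = \dot g(r)$ independent of $\theta$). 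Assembling the three estimates yields ${\mathbb F}[u;\X] \ge \pi\ln(b/a) + \pi\int_a^b\dot g^2 r\,dr = {\mathbb F}[\bar u;\X]$. One should check that $g$ is indeed in $W^{1,2}$ so that $\bar u$ is admissible: $\dot g(r) = \frac{1}{2\pi}\int_0^{2\pi}\frac{u\times u_r}{\abs u^2}\,d\theta$ is in $L^2(a,b)$ by the Jensen bound combined with finiteness of ${\mathbb F}[u;\X]$ (and $g(a)=0$ from \eqref{sym2}), and $g(b) = 2\pi k \in 2\pi\Z$ by \eqref{degkZ}, so $Q[g]={\bf R}[g]$ satisfies $Q(a)=Q(b)=I$.

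\smallskip

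\noindent\textbf{Homotopy invariance.} Here I would show ${\bf deg}(\bar u|\bar u|^{-1}) = k$ whenever $u\in A_k$. By \eqref{degkZ}, ${\bf deg}(\bar u|\bar u|^{-1}) = \frac{1}{2\pi}\int_a^b \frac{\bar u\times\bar u_r}{\abs{\bar u}^2}\,dr = \frac{1}{2\pi}\int_a^b \dot g(r)\,dr = \frac{g(b)-g(a)}{2\pi}$. But from \eqref{sym2}, $g(b)-g(a) = g(b) = \frac{1}{2\pi}\int_a^b\int_0^{2\pi}\frac{u\times u_r}{\abs u^2}\,d\theta\,dr$. The remaining point is to identify this double integral with $2\pi\,{\bf deg}(u|u|^{-1}) = 2\pi k$: since the winding number $\frac{1}{2\pi}\int_a^b \frac{u(r,\theta)\times u_r(r,\theta)}{\abs u^2}\,dr$ is, by Proposition \ref{conv-deg} and the continuity discussion, independent of $\theta$ and equal to $k$ for a.e.\ $\theta$, averaging over $\theta\in[0,2\pi]$ gives $\frac{1}{2\pi}\int_0^{2\pi}\big(\frac{1}{2\pi}\int_a^b\frac{u\times u_r}{\abs u^2}\,dr\big)\,d\theta = k$, i.e.\ $g(b) = 2\pi k$. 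Hence ${\bf deg}(\bar u|\bar u|^{-1}) = k$ and $\bar u\in A_k$.

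\smallskip

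\noindent\textbf{Main obstacle.} The delicate points are not the algebra but the measure-theoretic/regularity issues: justifying that for $u\in W^{1,2}\cap{\mathfrak A}$ the radial winding-number integral $\frac{1}{2\pi}\int_a^b\frac{u\times u_r}{\abs u^2}\,dr$ is well-defined for a.e.\ $\theta$, equals the (integer) degree, and is $\theta$-independent — this needs the Lebesgue monotonicity/continuity facts quoted in the paper together with a Fubini argument, and it is exactly the content deferred to Section \ref{SecSeven} (the identity \eqref{postponed} is the $u_\theta$-analogue). I would invoke that machinery rather than reprove it. The other care point is the exchange of orders of integration in \eqref{sym2} and the verification $g\in W^{1,2}$, both of which follow once ${\mathbb F}[u;\X]<\infty$ is used via the Jensen bound on $\dot g$.
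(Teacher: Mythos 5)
Your decomposition of ${\mathbb F}[u;\X]$ and the treatment of the $(u\times u_r)$ and $(u\times u_\theta)$ terms via Jensen's inequality (the latter combined with the integral identity \eqref{postponed}) coincide with the paper's proof, and your identification $g(b)=2\pi k$ for the homotopy claim is the same as the paper's appeal to \eqref{degkZ}. Where you diverge --- and where the proposal has a gap --- is the radial-modulus term
\begin{equation*}
\int_\X\frac{\abs{\nabla\abs u}^2}{\abs u^2}\,dx \;\ge\; 2\pi\log(b/a),
\end{equation*}
which is the crux the paper establishes via the coarea formula applied to $f=\abs u$, the isoperimetric inequality (which needs $\det\nabla u=1$ to compare the areas enclosed by $\{\abs u\le t\}$ and $\{\abs x\le t\}$), the equimeasurability $\int_\X dx/\abs u^2 = 2\pi\log(b/a)$ (again from incompressibility), and a final Cauchy--Schwarz. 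In your write-up you only assert that a ``one-dimensional argument'' bounds this term below and then drift into merely computing ${\mathbb F}[\bar u;\X]$; as written this does not prove the inequality.

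That said, the one-dimensional route you allude to does close the gap and is actually more elementary than the paper's: for a.e.\ $\theta$ the map $r\mapsto\log\abs{u(r,\theta)}$ is absolutely continuous on $[a,b]$ with $\abs{u(a,\theta)}=a$, $\abs{u(b,\theta)}=b$, so $\int_a^b \partial_r\abs u/\abs u\,dr=\log(b/a)$, and Cauchy--Schwarz with weight $r$ then gives $\int_a^b (\partial_r\abs u)^2\abs u^{-2}\,r\,dr\ge\log(b/a)$; integrating in $\theta$ and using $\abs{\nabla\abs u}^2\ge(\partial_r\abs u)^2$ yields the bound without invoking coarea, the isoperimetric inequality, or even $\det\nabla u=1$. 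The trade-off is that this 1D trick exploits the special weight $1/t^2$ (Cauchy--Schwarz is saturated exactly when $\abs u\equiv r$) and does not extend to the general weight $\Phi(\abs u)$ in Theorem~\ref{mainthm}, where the coarea/isoperimetric route together with Proposition~\ref{prop1} is what the paper reuses; that explains why the paper takes the heavier road even for this proposition. One small arithmetic slip to fix: ${\mathbb F}[\bar u;\X]=2\pi\ln(b/a)+\pi\int_a^b\dot g^2\,r\,dr$, not $\pi\ln(b/a)+\cdots$; the prefactor $n/2=1$ in \eqref{en1} at $n=2$ leaves the full $\int_\X dx/\abs x^2=2\pi\ln(b/a)$.
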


\begin{proof}
Clearly the symmetrised twist map $\bar u$ is in the same homotopy class as $u$ since by definition $g \in W^{1,2}[a, b]$ satisfies 
$g(a)=0$ and 
\begin{equation}
g(b) = \frac{1}{2\pi}\int_a^b \int_0^{2\pi} \frac{u \times u_r}{\abs{u}^2} \, d\theta dr = 2 \pi k.
\end{equation} 
Therefore $\bar u \in A_k$ as a result of \eqref{degkZ}. Next the $\mathbb{F}$ energy of $\bar{u}$ satisfies the bound
\begin{align}
{\mathbb F} [\bar{u}; \X] - 2\pi \log(b/a) &= \pi \int_a^b |\dot Q(r)|^2 r \, dr \nonumber \\
&= \pi \int_a^b |\dot g(r)|^2 r \, dr \nonumber \\
&= \pi \int_a^b \left[\frac{1}{2\pi}\int_0^{2\pi} \frac{u \times u_r}{\abs{u}^2} d\theta\right]^2 r \, dr  \nonumber \\
&\leq \frac{1}{2}\int_0^{2\pi}\int_a^b \frac{(u \times u_r)^2}{\abs{u}^4} r \, drd\theta
\end{align}
where the last line is a result of Jensen's inequality. Therefore by referring to (\ref{lower-bound-u-equation}) all that 
is left is to justify the inequality
\begin{equation}
 2\pi \log(b/a) \leq \int_\X \frac{\abs{\nabla \abs{u}}^2}{\abs{u}^2} \, dx.
\end{equation}

Towards this end we use the isoperimetric inequality in the context of sets of finite perimeter and the coarea formula in the context 
of Sobolev spaces: For real-valued $f$ and non-negative Borel $g$: 
\begin{equation}
\int_\X g(x) |\nabla f| \, dx = \int_\R \int_{\lbrace f=t \rbrace} g(x) \, d\mathcal{H}^1(x) \, dt.
\end{equation}
({\it See}, e.g., \cite{Brothers, MSZ}.) Then upon taking $f = |u| \in W^{1,2}(\X) \cap {\bf C}(\overline \X)$ and $g = 1/ |u|^2$ this gives 
\begin{align} \label{coareaSob}
\int_\X \frac{\abs{\nabla \abs{u}}}{\abs{u}^2} \, dx &= \int_a^b \left( \int_{\lbrace \abs{u} = t \rbrace} d\mathcal{H}^1 \right) \frac{dt}{t^2} 
= \int_a^b \mathcal{H}^1(\lbrace \abs{u}=t \rbrace) \, \frac{dt}{t^2}. 
\end{align}

Now since the level sets $E_t = \{x \in \X : \abs{u(x)} \leq t \}$ and $F_t=\{x \in \X : \abs{x} \leq t \}$ enclose the same area due to 
$\det \nabla u = 1$ {\it a.e.} (we can consider $u$ as extended by identity inside $\{|x|<a\}$) an application of the isoperimetric 
inequality gives $2\pi t = \mathcal{H}^1(\lbrace \abs{x}=t \rbrace) = {\mathcal H}^1(\partial^\star F_t) 
\leq {\mathcal H}^1(\partial^\star E_t) = \mathcal{H}^1(\lbrace \abs{u}=t \rbrace)$ for $a.e.$ $t\in [a,b]$ ({\it cf.}, e.g., 
\cite{Brothers, Federer}). Thus substituting in \eqref{coareaSob} results in the lower bound
\begin{align}
\int_\X \frac{\abs{\nabla \abs{u}}}{\abs{u}^2} \, dx & = \int_a^b \mathcal{H}^1(\lbrace \abs{u}=t \rbrace) \, \frac{dt}{t^2} \\
& \geq  \int_a^b \mathcal{H}^1(\lbrace \abs{x}=t \rbrace) \, \frac{dt}{t^2} = \int_a^b 2\pi t \, \frac{dt}{t^2} = 2\pi \log (b/a). \nonumber 
\end{align}

Finally we arrive at the conclusion by noting that $u$ and $\phi$ have the same distribution function, that is, again as a result of the pointwise 
constraint $\det \nabla u =1$ {\it a.e.} in $\X$: 
$\alpha_u(t) = |\lbrace x \in \X : \abs{u(x)} \geq t \rbrace | = | \lbrace x \in \X : \abs{x} \geq t \rbrace | = \alpha_\phi(t)$ and therefore
\begin{equation*}
\int_\X \frac{dx}{\abs{u}^2} = \int_a^\infty -2 \alpha_u(t) \, \frac{dt}{t^3} + \frac{\abs{\X}}{a^2} 
= \int_a^{\infty} -2 \alpha_\phi(t) \, \frac{dt}{t^3} + \frac{\abs{\X}}{a^2}
= \int_\X \frac{dx}{\abs{x}^2} = 2\pi \log(b/a).  
\end{equation*}

Now putting all the above together, a final application of H\"older inequality gives,
\begin{align}
 \left(2\pi \log(b/a) \right)^2 &\leq \left( \int_\X \frac{\abs{\nabla \abs{u}}}{\abs{u}^2} \, dx \right)^2 \nonumber \\
&\leq \int_\X \frac{\abs{\nabla \abs{u}}^2}{\abs{u}^2} \, dx \times \int_\X \frac{dx}{\abs{u}^2} \nonumber \\
&= 2 \pi \log(\frac{b}{a})\int_\X \frac{\abs{\nabla \abs{u}}^2}{\abs{u}^2} \, dx
\end{align} 
and thus eventually we have 
\begin{equation*}
2\pi \log(b/a) \leq \int_\X \frac{\abs{\nabla \abs{u}}^2}{\abs{u}^2} \, dx 
\end{equation*} 
and so the conclusion follows. 
\end{proof}

\section{The ${\mathbb F}$ energy and connection with the distortion function}
\setcounter{equation}{0}

In this section we delve into the relationship between the energy functional ${\mathbb F}$ in (\ref{F-energy-equation}) and the notions 
of distortion function and energy of geometric function theory. In particular we show that in two dimensions twist maps have minimum 
distortion among all incompressible Sobolev homeomorphisms of the annulus with identity boundary values in any given homotopy 
class. To fix notation and terminology let $U, V \subset \R^n$ be open sets and 
$f \in W_{loc}^{1,1}(U, V)$. Then $f$ is said to have finite outer distortion {\it iff} 
there exists measurable function $K=K(x)$ with $1\leq K(x)<\infty$ such that 
\begin{equation}
\abs{\nabla f(x)}^n \leq n^{n/2} K(x) \det \nabla f (x).
\end{equation}
The smallest such $K$ is called the {\it outer} distortion of $f$ and denoted by $K_O(x, f)$. Note that here $|A| = \sqrt{{\rm tr} A^tA}$ is 
the Hilbert-Schmidt norm of the $n \times n$ matrix $A$. Naturally $1 \le K_O (x, f) < \infty$ and it measures the deviation of $f$ from 
being conformal. We also speak of the {\it inner} distortion function $K=K_I(x, f)$ defined by the quotient 
\begin{equation}
K_I(x,f) = \frac{n^{-n/2} \abs{{\rm cof}\,\nabla f}^n}{\det ({\rm cof}\, \nabla f)}, \label{innerdistortion}
\end{equation}
when $\det \nabla f (x) \neq 0$ and $K_I(x, f)=1$ otherwise. We define the {\it distortion} energy associated to the inner distortion 
$K_I(x, f)$ \eqref{innerdistortion} by the integral 
\begin{align} \label{distortenergy}
\mathbb{W}[f; U]  = \int_{U} \frac{K_I(x,f)}{\abs{x}^n}\, dx.
\end{align}
Related energies and more have been considered in \cite{IO} with close links to the work in \cite{AIMO}. 
The connection between the ${\mathbb F}$ energy and the distortion energy $\mathbb{W}$ \eqref{distortenergy} is implicit in the following result 
of T.~Iwaniec, G.~Martin, J.~Onninen and K.~Astala \cite{AIMO}. (See also \cite{AIM}, \cite{IO} and \cite{IS}.)

\begin{thm} \label{Iwaniec}
Suppose $f \in W^{1,n}_{loc} (\X, \X)$ is a homeomorphism with finite outer distortion. Assume $K_I$ is $L^1$-integrable over $\X$. 
Then the inverse map $h=f^{-1}: \X \rightarrow \X$ lies in the Sobolev space $W^{1,n}(\X, \X)$. 
Furthermore  
\begin{equation}
n^{-\frac{n}{2}} \int_{\X} \frac{\abs{ \nabla h(y)}^n}{\abs{h(y)}^n} \, dy = \int_{\X} \frac{K(x,f)}{\abs{x}^n} \, dx.
\end{equation}
\end{thm}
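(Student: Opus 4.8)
The plan is to derive the identity by applying the classical change-of-variables formula for Sobolev homeomorphisms, exploiting the incompressibility-type relation between $f$ and its inverse $h=f^{-1}$ and rewriting the integrand via the standard algebraic identities relating $\nabla h(f(x))$ to $(\nabla f(x))^{-1}$ and the cofactor matrix. The first step is to recall that since $f\in W^{1,n}_{loc}(\X,\X)$ is a homeomorphism of finite outer distortion with $K_I\in L^1$, the results of \cite{AIMO} (see also \cite{AIM,IS}) guarantee that $h\in W^{1,n}(\X,\X)$ and that the Jacobian relation $\det\nabla h(y)=1/\det\nabla f(h(y))$ holds almost everywhere, together with $\nabla h(y)=(\nabla f(h(y)))^{-1}$ for a.e.\ $y$; this is exactly the regularity content that lets the subsequent manipulations be carried out rigorously, and it is where the hypotheses are genuinely used.

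Next I would perform the substitution $y=f(x)$, so $x=h(y)$, in the left-hand integral. Using the area formula for Sobolev homeomorphisms,
\begin{equation*}
\int_\X \frac{\abs{\nabla h(y)}^n}{\abs{h(y)}^n}\,dy = \int_\X \frac{\abs{\nabla h(f(x))}^n}{\abs{x}^n}\,\det\nabla f(x)\,dx,
\end{equation*}
where the change of variables is legitimate because $f$ maps $\X$ onto $\X$ bijectively (up to null sets) and satisfies Lusin's condition $(N)$ as a consequence of the finite-distortion and $W^{1,n}$ hypotheses. The key algebraic point is then that for an invertible matrix $M=\nabla f(x)$ one has $M^{-1}=(\det M)^{-1}\,(\mathrm{cof}\,M)^t$, hence $\abs{\nabla h(f(x))}^n=\abs{M^{-1}}^n=(\det M)^{-n}\abs{\mathrm{cof}\,M}^n$, so that
\begin{equation*}
\abs{\nabla h(f(x))}^n\,\det\nabla f(x) = \frac{\abs{\mathrm{cof}\,\nabla f(x)}^n}{(\det\nabla f(x))^{n-1}} = \frac{\abs{\mathrm{cof}\,\nabla f(x)}^n}{\det(\mathrm{cof}\,\nabla f(x))},
\end{equation*}
using $\det(\mathrm{cof}\,M)=(\det M)^{n-1}$. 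By the definition \eqref{innerdistortion} this last quantity equals $n^{n/2}K_I(x,f)$ wherever $\det\nabla f(x)\neq 0$, and the set where the Jacobian vanishes contributes nothing to either side (on the left because $h$ has its derivative controlled there, on the right because $K_I$ was set to $1$ on a set that, pulled back, is $f$-null by condition $(N)$). Assembling these pieces and multiplying through by $n^{-n/2}$ yields the stated formula.

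The step I expect to be the genuine obstacle is the rigorous justification of the change of variables together with the a.e.\ validity of the inverse-derivative and Jacobian identities for maps that are merely $W^{1,n}$ with finite distortion rather than classically differentiable: one must invoke the fact that such $f$ satisfies Lusin's condition $(N)$ and that $h=f^{-1}$ is differentiable a.e.\ with $\nabla h(f(x))\nabla f(x)=I$ for a.e.\ $x$ in the set of positive Jacobian, which is precisely the deep content drawn from \cite{AIMO,AIM,IS}. Once that analytic input is granted, the remainder is the routine linear-algebra computation sketched above, and I would simply cite the cited works for the substitution machinery and present the algebraic identity as the heart of the argument. Since the statement attributes the theorem to Astala, Iwaniec, Martin and Onninen, in practice the proof given in the paper is expected to be essentially a reference to \cite{AIMO} with the above identification of the integrands made explicit.
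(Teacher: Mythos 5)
Your proposal is correct and takes essentially the same route as the paper: cite Theorem 10.4 of \cite{AIMO} for the Sobolev regularity of $h=f^{-1}$, then use the linear-algebra identity $n^{n/2}K_I(x,f)=\abs{\mathrm{cof}\,\nabla f}^n/\det(\mathrm{cof}\,\nabla f)=\abs{(\nabla f)^{-1}}^n\det\nabla f=\abs{\nabla h(f)}^n\det\nabla f$, divide by $\abs{x}^n$, and integrate via the area formula. The paper's proof is just a more compressed version of the same computation, leaving the Lusin-$(N)$/a.e.\ differentiability justifications implicit in the reference to \cite{AIMO}.
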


\pf The first assertion is Theorem 10.4 pp.~22 of \cite{AIMO}. For the second assertion using definitions we have
\begin{equation*}
n^{n/2} K_I (x, f) = \frac{|{\rm cof} \,\nabla f|^n}{\det ({\rm cof}\,\nabla f)} = |(\nabla f)^{-1}|^n \det \nabla f 
=  |\nabla h(f)|^n \det \nabla f, 
\end{equation*}
and the conclusion follows upon dividing by $|x|^n$ and integrating using the area formula. \hfill $\square$
\\[1 mm]

Now let ${\mathcal A}^n_\phi(\X) =\{u \in W^{1,n}(\X; \R^n) : \det \nabla u =1 \mbox{ {\it a.e.} in $\X$ and $u|_{\partial \X} =\phi$} \}$. 
As in Section \ref{SecTwo} each $u$ in ${\mathcal A}^n_\phi(\X)$ admits a representative in ${\mathfrak A}(\X)$. Now restricting to 
homeomorphisms $u \in \mathcal{A}^n_\phi(\X)$ the above theorem gives $v=u^{-1}\in \mathcal{A}^n_\phi(\X)$ and so by the 
incompressibility constraint
\begin{align}
\int_{\X} \frac{K_I(x,u)}{\abs{x}^n} \, dx 
= n^{-\frac{n}{2}} \int_{\X}  \frac{\abs{{\rm cof}\,\nabla u}^n}{\abs{x}^n} \,dx 
= n^{-\frac{n}{2}} \int_{\X}  \frac{\abs{\nabla v}^n}{\abs{v}^n} \,dx.
\end{align}

In the planar case this allows us to relate the distortion energy of a homeomorphism $u$, say, in $A_{k}$ to the 
$\mathbb{F}$ energy of the inverse map $v=u^{-1}$ in $A_{-k}$ through
\begin{equation}
\mathbb{F}[v;\X] = \frac{1}{2} \int_\X \frac{\abs{\nabla v(y)}^2}{\abs{v(y)}^2} \, dy 
=  \frac{1}{2} \int_\X \frac{\abs{\nabla u(x)}^2}{\abs{x}^2} \, dx = \int_{\X} \frac{K_I(x,u)}{\abs{x}^2} \, dx = \mathbb{W}[u;\X]. \label{energyidentity}
\end{equation}

Therefore by showing that twist maps minimise the ${\mathbb F}$ energy within their homotopy classes we have implicitly shown 
that twist maps minimise the distortion energy within their respective homotopy classes of homeomorphisms 
(as the inverse of a twist map is a twist map in {\it opposite} direction and clearly twist maps are homeomorphisms of annuli 
onto themselves).

\begin{thm} The distortion energy ${\mathbb W}$ has a minimiser $u=u(x; k)$ $($$k\in\Z$$)$ among all 
homeomorphisms within $A_k$. The minimiser is a twist map of the form $u =Q[g] x$ 
where $g(r) = 2\pi k \ln(r/a)/\ln(b/a)$.
In particular the minimum energy is given by,
\begin{align}
\mathbb{W}[u;\X] = \int_\X \frac{K_I(x, u)}{|x|^2} \, dx = 2\pi \ln(b/a) + 4\pi^3 k^2/\ln(b/a). 
\end{align}
\end{thm}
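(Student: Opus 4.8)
The plan is to transfer the problem through the energy identity \eqref{energyidentity} to the already-established minimality of twist maps for $\mathbb{F}$. Concretely, suppose $u$ is a homeomorphism in $A_k$. Its inverse $v=u^{-1}$ is then (by Theorem \ref{Iwaniec} applied with $n=2$, using the $W^{1,2}$-integrability of $K_I$ which is finite since $u$ is incompressible and a self-homeomorphism of $\overline{\X}$, whence $|u|$ is bounded away from $0$) an admissible incompressible Sobolev map, and since inverting a planar self-map of the annulus negates the winding number, $v\in A_{-k}$. By \eqref{energyidentity} we have $\mathbb{W}[u;\X]=\mathbb{F}[v;\X]$. Hence minimising $\mathbb{W}$ over homeomorphisms in $A_k$ is equivalent to minimising $\mathbb{F}$ over the (inverses of these, which are) homeomorphisms in $A_{-k}$.

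First I would record the lower bound: by Proposition \ref{sym} (symmetrisation), for \emph{any} $w\in A_{-k}$ — in particular any $v=u^{-1}$ — the symmetrised twist $\bar w=Q[g]x$ lies in $A_{-k}$ and has $\mathbb{F}[\bar w;\X]\le\mathbb{F}[w;\X]$; and among twist maps $Q[g]x$ in $A_{-k}$, the energy \eqref{en1} reduces (as noted after \eqref{degkZ}) to $2\pi\log(b/a)$ plus $\pi\int_a^b|\dot g|^2 r\,dr$ subject to $g(a)=0$, $g(b)=-2\pi k$, which by Jensen / the Euler–Lagrange computation of Section \ref{EuLgSec} is minimised exactly by the harmonic-in-$\log r$ profile $g(r)=-2\pi k\log(r/a)/\log(b/a)$. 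This yields
\begin{equation*}
\mathbb{F}[v;\X]\ \ge\ 2\pi\log(b/a)+\pi\int_a^b\left(\frac{2\pi k}{r\log(b/a)}\right)^2 r\,dr\ =\ 2\pi\log(b/a)+\frac{4\pi^3 k^2}{\log(b/a)},
\end{equation*}
so $\mathbb{W}[u;\X]$ is bounded below by the same quantity for every homeomorphism $u\in A_k$.

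Next I would exhibit the minimiser and compute the value. Take $u=Q[g]x$ with $g(r)=2\pi k\log(r/a)/\log(b/a)$; this is the twist computed in Section \ref{EuLgSec} (formula \eqref{rotation-angle-2d-equation}), it is a homeomorphism of $\overline\X$ onto itself with identity boundary values, it is incompressible, and it lies in $A_k$ by \eqref{degkZ}. Its inverse is the twist map with angle $-g$, which is precisely the $\mathbb{F}$-extremiser in $A_{-k}$, so by \eqref{energyidentity} and the direct computation of \eqref{en1},
\begin{equation*}
\mathbb{W}[u;\X]=\mathbb{F}[u^{-1};\X]=2\pi\log(b/a)+\frac{4\pi^3 k^2}{\log(b/a)},
\end{equation*}
which matches the lower bound; hence $u$ is a minimiser with the stated energy. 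Finally I would note that the $L^1$-local-minimiser addendum is not claimed here, so nothing further is needed; for the existence half one may alternatively invoke Theorem \ref{Existence-Min} to get an $\mathbb{F}$-minimiser in $A_{-k}$ and then check it is a twist via equality in \eqref{lower-bound-u-equation}.

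The main obstacle I anticipate is not any single computation but the bookkeeping of \emph{which} class the inverse lands in and ensuring the chain of equivalences is tight: one must make sure that the infimum of $\mathbb{W}$ over homeomorphisms in $A_k$ really equals the infimum of $\mathbb{F}$ over the set $\{u^{-1}:u\in A_k\text{ a homeomorphism}\}$, and that the latter infimum coincides with the infimum of $\mathbb{F}$ over \emph{all} of $A_{-k}$ (so that Proposition \ref{sym}, proved for arbitrary admissible maps, applies). This is where one needs that the $\mathbb{F}$-minimising twist in $A_{-k}$ is itself a homeomorphism whose inverse is a homeomorphism in $A_k$ — true since twist maps and their inverses are manifestly homeomorphisms of the annulus — so the two infima agree and are attained at the twist. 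The degree/winding-number verification for \eqref{degkZ} and the sign flip under inversion should be stated carefully but are routine.
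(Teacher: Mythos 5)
Your argument mirrors the paper's own proof: both pass from $\mathbb{W}$ on homeomorphisms in $A_k$ to $\mathbb{F}$ on $A_{-k}$ via the inverse-map energy identity \eqref{energyidentity}, invoke Proposition \ref{sym} to identify the minimising twist in $A_{-k}$, and then evaluate that twist's energy directly. The only slip is your reference to the ``$W^{1,2}$-integrability of $K_I$'' where Theorem \ref{Iwaniec} actually requires $L^1$-integrability of $K_I$ (which holds here since $K_I=\tfrac12|\nabla u|^2$ when $n=2$ and $\det\nabla u=1$).
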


\pf That $u=u_k$ minimises ${\mathbb W}$ amongst homeomorphisms in $A_k$ is a result of $u_{-k}= (u_k)^{-1}$ minimising 
$\mathbb{F}$ over $A_{-k}$ (Proposition \ref{sym}) and \eqref{energyidentity}. Indeed arguing indirectly assume there is a 
homeomorphism $v\in A_k$: $\mathbb{W}[v; \X] < \mathbb{W}[u_k; \X]$. Then by \eqref{energyidentity}, 
$\mathbb{F}[v^{-1}; \X]<\mathbb{F}[u_{-k}; \X]$ and this is a contradiction as 
$v^{-1}$, $u_{-k} \in A_{-k}$ while $\mathbb{F}[u_{-k};\X] = \inf_{A_{-k}} \mathbb{F}$. We are thus left with the 
calculation of the $\mathbb{W}$ energy of $u=u_k$. To this end put $v=u^{-1}$: 
\begin{align}
\mathbb{F}[v; \X] =\frac{1}{2}  \int_\X \frac{|\nabla v|^2}{|v|^2} \, dx &= 2\pi \ln (b/a) + \pi \int_a^b r\dot{g}_{-k}(r)^2 \, dr \nonumber \\
&= 2\pi \ln(b/a) + \frac{4\pi^3 k^2}{\ln(b/a)},
\end{align}
and so a further reference to \eqref{energyidentity} completes the proof. \hfill $\square$
\\[1 mm]

In the higher dimensions, i.e. $n>2$, from Theorem \ref{Iwaniec} we can get an analogous identity to \eqref{energyidentity} for homeomorphisms 
$u$ in $\mathcal{A}^n_\phi$. Indeed, with $v=u^{-1}$ 
\begin{equation} 
n \mathbb{F}[v; \X] =  \int_\X \frac{\abs{\nabla v(y)}^n}{\abs{v(y)}^n} \, dy 
 =  \int_\X \frac{\abs{{\rm cof}\,\nabla u(x)}^n}{\abs{x}^n} \, dx = n^{n/2} \mathbb{W}[u; \X], \label{energyidentity2}
\end{equation}
where the energies ${\mathbb F}=\mathbb{F}_n$ and ${\mathbb W}$ are given by,
\begin{align}
\mathbb{F}[v; \X] = \frac{1}{n} \int_{\X} \frac{\abs{\nabla v(y)}^n}{\abs{v(y)}^n} \, dy, \qquad {\mathbb W}[u; \X]= \int_\X \frac{K_I(x, u)}{|x|^n} \, dx.
\end{align}

Therefore again to find minimisers of $\mathbb{W}$ among homeomorphisms in $u\in\mathcal{A}^n_\phi(\X)$ one can follows the lead of $n=2$ and 
consider the energy $\mathbb{F}$ over $\mathcal{A}^n_\phi(\X)$. It is straightforward to see that we have equality in \eqref{energyidentity2} for twist 
maps $u\in\mathcal{A}^n_\phi(\X)$ with the distortion energy of $u= Q(r)x$ given by
\begin{align}
\mathbb{W}[u;\X] = n^{-n/2} \int_{\X} \left(n \abs{x}^{-2} + \abs{\dot{Q}\theta}^2\right)^{n/2} \, dx.   
\end{align}
Now restricting to the particular case of the twist map being ({\it cf.} \cite{ShT2})
\begin{align}
u(x) = \exp \left( -g(r) J \right) x, \qquad x \in \X,  \label{restrictedtwist}
\end{align} 
with $J$ is as in Section \ref{EuLgSec} and $g\in W^{1,n}([a,b])$ the angle of rotation describing the twist. The corresponding distortion energy is
\begin{align*}
\mathbb{W}[u;\X] &= n^{-n/2} \int_{\X} \left(n \abs{x}^{-2} + \abs{\dot{g}}^2\right)^{n/2} \, dx \\
&= \omega_n n^{-n/2} \int_{a}^b \left(n r^{-2} + \abs{\dot{g}}^2\right)^{n/2} r^{n-1} \, dr .   
\end{align*}

Note that in higher dimensions (i.e., $n\geq 3$) as discussed earlier there are only two homotopy classes in ${\mathcal A}^n_\phi(\X)$. 
A twist map $u=Q(r)x$ lies in the non-trivial homotopy class of ${\mathcal A}^n_\phi(\X)$ {\it iff} the twist loop 
$Q=Q(r) \in \C([a,b], \mathbf{SO}(n))$ based at $I$ lifts to a non-closed path $R=R(r) \in \C([a,b],\mathbf{Spin}(n))$ connecting $\pm 1$ 
in $\mathbf{Spin}(n)$ ({\it see} \cite{TA4} for more.)
\footnote{Note that $\mathbf{Spin}(n)$ is the universal cover of $\mathbf{SO}(n)$ and $\{\pm 1\} \subset {\bf Spin}(n)$ is the fibre 
over $I$ under the covering map.}

Likewise a twist $u$ of the form \eqref{restrictedtwist} lies in the non-trivial homotopy class of ${\mathcal A}^n_\phi(\X)$ {\it iff} the angle 
of rotation function $g$ satisfies $g(b)-g(a)=2\pi k$ for some $k$ odd. When $k$ is even the twist map $u$ lies in the trivial homotopy 
class of ${\mathcal A}_\phi(\X)$. 
\footnote{The identity boundary conditions on $u$ dictates that the angle of rotation function must satisfy $g(b)-g(a)=2\pi k$ for some $k\in\Z$.}

\section{Other variants of the Dirichlet energy} \label{SecSeven}
\setcounter {equation}{0}

The goal of this section is to establish various energy bounds and identities, when $n=2$, by invoking the regularity and the measure preserving 
constraints satisfied by the elements of $\mathcal{A}_{\phi}(\X)$. These inequalities will ultimately lead to useful results for extremisers 
and minimisers of variants of the Dirichlet energy in homotopy classes of ${\mathcal A}_\phi(\X)$. We begin with the following identity.
\begin{prop}\label{prop1}
For $\Phi \in {\bf C}^1[a,b]$ the integral identity 
\begin{equation}
\int_\X \Phi(|u|) \, dx = \int_\X \Phi(|x|) \, dx
\end{equation}
holds for all $u \in {\mathcal A}_\phi(\X)$.
\end{prop}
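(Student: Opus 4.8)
The plan is to prove the identity via the distributional (layer-cake) representation of the integral, exploiting the fact that $\det\nabla u = 1$ a.e.\ forces $u$ and $\phi$ to share the same distribution function of the modulus. First I would recall from the argument already given at the end of the proof of Proposition \ref{sym} that for $u \in {\mathcal A}_\phi(\X)$ one has
\begin{equation*}
\alpha_u(t) := |\{x \in \X : |u(x)| \ge t\}| = |\{x \in \X : |x| \ge t\}| =: \alpha_\phi(t), \qquad t \ge 0,
\end{equation*}
which follows because $u$ (extended by the identity inside $\{|x|<a\}$ and noting $u$ maps $\overline\X$ onto $\overline\X$) is measure preserving on account of the pointwise constraint $\det\nabla u = 1$ a.e., so that the measure of any superlevel set of $|u|$ equals the measure of the preimage, which for the radial comparison map $\phi = \mathrm{id}$ is the corresponding annular shell. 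The key point is that $|u|$ takes values in $[a,b]$ (since $u$ is a self-map of $\overline\X$), so all the relevant distribution functions are supported on $[a,b]$.

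Next I would write, for $\Phi \in {\bf C}^1[a,b]$, the fundamental theorem of calculus in the form $\Phi(s) = \Phi(a) + \int_a^s \Phi'(t)\,dt$ for $s \in [a,b]$, and substitute $s = |u(x)|$ and $s = |x|$ respectively. Integrating over $\X$ and applying Fubini's theorem to exchange the order of integration gives
\begin{align*}
\int_\X \Phi(|u|)\, dx &= \Phi(a)\,|\X| + \int_\X \int_a^{|u(x)|} \Phi'(t)\, dt\, dx \\
&= \Phi(a)\,|\X| + \int_a^b \Phi'(t)\, |\{x \in \X : |u(x)| > t\}|\, dt \\
&= \Phi(a)\,|\X| + \int_a^b \Phi'(t)\, \alpha_u(t)\, dt,
\end{align*}
where I have used that $\{|u(x)|>t\}$ and $\{|u(x)|\ge t\}$ differ by a null set for a.e.\ $t$ (indeed the modulus pushes forward Lebesgue measure to an absolutely continuous measure here, by the change of variables the constraint provides). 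The identical computation with $u$ replaced by $\phi$ yields $\int_\X \Phi(|x|)\,dx = \Phi(a)\,|\X| + \int_a^b \Phi'(t)\,\alpha_\phi(t)\,dt$, and since $\alpha_u \equiv \alpha_\phi$ the two expressions coincide, proving the claim.

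I expect the only genuine subtlety — hence the step to handle with care — to be the justification that $u$ is measure preserving in the precise sense that $\alpha_u = \alpha_\phi$, i.e.\ that $\det\nabla u = 1$ a.e.\ together with $u$ being a homeomorphism (or at least a continuous, degree-one, onto self-map by Section \ref{SecTwo}) really does give $|u^{-1}(E)| = |E|$ for Borel $E \subset \X$; this is the content of the area formula for Sobolev maps in the critical integrability regime and was already invoked in the proof of Proposition \ref{sym}, so I would simply cite that passage (or \cite{Federer}, \cite{MSZ}) rather than reprove it. Everything else — the use of $\Phi \in {\bf C}^1[a,b]$ to get the Fubini-type layer-cake identity, and the boundedness of $|u|$ keeping all integrals finite — is routine. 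An even shorter alternative would be to note that this proposition is literally the statement that $|u|$ and $|\cdot|$ are equidistributed on $\X$, from which equality of the integrals of any Borel function of the modulus is immediate by the standard change-of-variables for pushforward measures; I would likely present the FTC/Fubini version for concreteness since it matches the style of the surrounding computations.
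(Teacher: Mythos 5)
Your proposal is correct and follows essentially the same route as the paper: writing $\Phi(|u|) = \Phi(a) + \int_a^{|u|}\dot\Phi(t)\,dt$, applying Fubini to obtain $\Phi(a)|\X| + \int_a^b \dot\Phi(t)\,\alpha_u(t)\,dt$, and then invoking $\alpha_u \equiv \alpha_\phi$ (the equidistribution of $|u|$ and $|x|$ coming from $\det\nabla u = 1$ a.e., already used at the end of the proof of Proposition \ref{sym}). The paper's proof is precisely this layer-cake computation, so there is nothing further to flag.
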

\begin{proof} Denoting by $\alpha_u=\alpha_u(t)$ the distribution function of $|u|$ we can write using basic considerations and invoking the standard properties of distribution functions
\begin{align}
\int_\X \Phi(\abs{u}) \, dx & = \int_{\X} \int_a^{\abs{u}} \dot \Phi (t) \, dt dx + \Phi(a) \abs{\X} 
= \int_a^b \dot \Phi(t) \alpha_u(t) \, dt + \Phi(a) \abs{\X} \nonumber \\
& = \int_a^b \dot \Phi(t) \alpha_x(t) \, dt + \Phi(a) \abs{\X} 
= \int_\X \int_a^{\abs{x}} \dot \Phi (t) \, dt dx + \Phi(a) \abs{\X} \nonumber \\
&= \int_\X \Phi(\abs{x}) \, dx,
\end{align}
which is the required conclusion. 
\end{proof}

We now collect a few more results which will be needed for the proof of our main theorem at the end of this section.

\begin{prop}\label{ann} 
Let $\Phi \in {\bf C}^1[a,b]$ and pick $u\in \mathcal{A}_\phi(\X)$. Consider the continuous closed curve $\gamma(\theta) = u(r,\theta)$ where 
$a<r<b$ is fixed. Then the integral identity
\begin{align}
\int_0^{2\pi} \Phi(\abs{u})^2(u\times u_{\theta}) \, d\theta \, \bigg|^r_a = 2\int_{\X_r} 
\left[\abs{u} \Phi(\abs{u})\dot{\Phi}(\abs{u})+\Phi(\abs{u})^2 \right] \,dx, \label{formula1}
\end{align}
holds for almost every $r\in (a,b)$, where $\X_r=\X[a,r]=\lbrace x\in\R^2:a<\abs{x}<r \rbrace$.
\end{prop}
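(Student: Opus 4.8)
The plan is to recognise the right-hand side as a surface integral over the (extended, measure-preserving) image of the sub-annulus $\X_r$ and then unwind it with the divergence theorem, using only the incompressibility constraint $\det\nabla u=1$ a.e. and the boundary condition $u=\phi$ on the inner sphere $|x|=a$. Concretely, set $\Psi(s)=s\,\Phi(s)^2$, so that $\Psi'(s)=\Phi(s)^2+2s\,\Phi(s)\dot\Phi(s)$; then the right-hand side is
\begin{equation*}
2\int_{\X_r}\!\left[|u|\,\Phi(|u|)\dot\Phi(|u|)+\Phi(|u|)^2\right]dx=\int_{\X_r}\!\left[\Psi'(|u|)+\Phi(|u|)^2\right]dx.
\end{equation*}
Note that $\nabla\big(\Psi(|y|)\big)=\Psi'(|y|)\,y/|y|$, hence $\operatorname{div}\big(\Psi(|y|)\,y/|y|^{?}\big)$ can be arranged to reproduce the integrand; the cleanest choice is to observe that if $G(y)=\tfrac12\Phi(|y|)^2 y$ then $\operatorname{div} G(y)=|y|\Phi(|y|)\dot\Phi(|y|)+\Phi(|y|)^2$, so the right-hand side of \eqref{formula1} equals $2\int_{\X_r}(\operatorname{div}G)(u)\,dx$.

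The key step is to pull the divergence back through $u$. Since $\det\nabla u=1$ a.e. and $u$ is (by the Lebesgue-monotonicity/degree theory recalled earlier) a continuous self-map of $\overline\X$, for any smooth vector field $G$ on $\R^2$ one has the pull-back identity
\begin{equation*}
(\operatorname{div}G)(u)\,\det\nabla u=\operatorname{div}\big((\operatorname{cof}\nabla u)^{t}\,G(u)\big),
\end{equation*}
which holds in the distributional sense for $W^{1,2}$ maps with the Piola identity $\operatorname{div}(\operatorname{cof}\nabla u)=0$; with $\det\nabla u=1$ the left side is just $(\operatorname{div}G)(u)$. Integrating over $\X_r$ and applying the divergence theorem gives
\begin{equation*}
\int_{\X_r}(\operatorname{div}G)(u)\,dx=\int_{\partial\X_r}\big\langle(\operatorname{cof}\nabla u)^{t}G(u),\nu\big\rangle\,d\mathcal H^1,
\end{equation*}
and the boundary $\partial\X_r$ consists of the inner circle $|x|=a$ (where $u=x$, so the contribution is explicit and, after extending by the identity, cancels) and the outer circle $|x|=r$. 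On $|x|=\rho$ the outward normal is $\theta$, and a direct computation in polar coordinates identifies $\langle(\operatorname{cof}\nabla u)^{t}G(u),\theta\rangle$ on $\{|x|=\rho\}$ with the integrand $\tfrac12\Phi(|u|)^2\,(u\times u_\theta)$ appearing on the left of \eqref{formula1} — here one uses the two-dimensional identity $(\operatorname{cof}A)^t e = (Ae^\perp)^\perp$ together with $u_\theta = \rho\,\nabla u\,\theta^\perp$. Evaluating the two boundary pieces and multiplying by $2$ yields exactly $\int_0^{2\pi}\Phi(|u|)^2(u\times u_\theta)\,d\theta\big|_a^r$, which is the assertion; the "for almost every $r$" qualifier is forced because $u(r,\cdot)$ is only a well-defined $W^{1,2}$ curve for a.e.\ $r$ by Fubini.

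The main obstacle is the rigorous justification of the pull-back/divergence identity for merely $W^{1,2}(\X,\R^2)$ maps rather than smooth ones: one must check that $(\operatorname{cof}\nabla u)^t G(u)$ is in $W^{1,1}$ (it is, since $G$ is $C^1$, $u$ is bounded and continuous, and $\nabla u\in L^2$ in two dimensions suffices), that the distributional Piola identity $\operatorname{div}\operatorname{cof}\nabla u=0$ applies, and that the trace on $\{|x|=r\}$ coincides with the restriction $u(r,\cdot)$ for a.e.\ $r$. This is standard but needs care; alternatively one can bypass it by first proving \eqref{formula1} for $u$ smooth (or for twist maps) and then passing to the limit using the weak closedness and uniform convergence properties of $\mathcal A_\phi(\X)$ established in Section~3 — I would present the smooth-then-approximate route if the direct distributional argument looks delicate. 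A convenient sanity check throughout is the case $\Phi\equiv 1$, where \eqref{formula1} collapses to $\int_0^{2\pi}(u\times u_\theta)\,d\theta\big|_a^r=2|\X_r|=2\pi(r^2-a^2)$, i.e.\ Proposition~\ref{invariant}/\eqref{postponed} in integrated form, so the constants must match.
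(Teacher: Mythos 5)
Your argument is correct and is essentially the paper's argument in dual notation: the paper introduces the one-form $\alpha = \Phi(|y|)^2(y_1\,dy_2 - y_2\,dy_1)$ (the flux form of your vector field $2G$), applies Stokes' theorem to $\alpha$ over the image region of $\X_r$ for a smooth diffeomorphism and then changes variables, which is the target-space rendition of your domain-side Piola/divergence computation. The paper then passes from smooth diffeomorphisms to general $u \in \mathcal{A}_\phi(\X)$ via the Hencl--Mora-Corral approximation theorem and Lebesgue differentiation, i.e.\ precisely the smooth-then-approximate route you flag as the safe option.
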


\pf We shall justify the assertion first when $u$ is a sufficiently smooth diffeomorphism and then pass on to the general case by invoking a suitable 
approximation argument. Towards this end consider first the case where $u$ is a smooth diffeomorphism with $u \equiv x$ on $\partial \X$. [Here 
$u$ need not satisfy the incompressibility condition in ${\mathcal A}_\phi(\X)$.] Let $\alpha$ denote 
the $1$-form 
\begin{equation}
\alpha = \Phi(\abs{x})^2 \left(x_1dx_2-x_2dx_1\right).
\end{equation} 
Then by a rudimentary calculation the pull-back of $\alpha$ under the ${\bf C}^\infty$ curve $\gamma$ is given by 
\begin{align}
\gamma^*\alpha = \Phi(\abs{u})^2 (u\times u_{\theta} ) \, d\theta.
\end{align}
Hence contour integration and basic considerations lead to the integral identity  
\begin{align}
\int_{\gamma} \alpha = \int_0^{2\pi} \Phi(\abs{u})^2 (u\times u_{\theta}) \, d\theta \, \bigg|_r.
\end{align}

Note that the ${\bf C}^\infty$ curve $\gamma$ here is diffeomorphic to $\Sp^1$ and as such by the Jordan-Sch\"oenflies theorem $\gamma$ is the boundary 
of some bounded region $C_\gamma\subset \R^2$ diffeomorphic to the unit ball $\B_1$. In particular due to the boundary conditions on $u$ we have that 
$a < \abs{\gamma}(\theta)=\abs{u}(r,\theta)$ when $a<r$ and so as a result $C_a^\gamma = \lbrace x \in\R^2: a<\abs{x}<\abs{\gamma} \rbrace\subset\X$ 
with boundaries of $\partial{\B}_a$ and $\gamma$. Hence application of Stoke's theorem gives
\begin{align}
\int_{C_a^\gamma} d\alpha = \int_{\partial C_a^\gamma} \alpha 
= \int_{\gamma} \alpha - \int_{|x|=a} \alpha 
= \int_0^{2\pi} \Phi(\abs{u})^2 u\times u_{\theta} \, d\theta \,\bigg |_a^r, \label{formula12}
\end{align}
for all $r\in[a,b]$. Next again by a rudimentary calculation we obtain that the pull-back of $d\alpha$ is,
\begin{align}
d\alpha = 2 \det \nabla u \left[\abs{u} \Phi(\abs{u})\dot{\Phi}(\abs{u})+\Phi(\abs{u})^2 \right]  dx_1 \wedge d x_2. \label{formula13}
\end{align}
Hence from \eqref{formula12} and \eqref{formula13} it follows that for all $r\in[a,b]$,
\begin{align}
2\int_{\X_r}\left[\abs{u} \Phi(\abs{u})\dot{\Phi}(\abs{u})+\Phi(\abs{u})^2 \right]  \det \nabla u \, dx 
&= \int_0^{2\pi} \Phi(\abs{u})^2 u\times u_{\theta} \, d\theta  \, \bigg|_a^r. \label{formula16}
\end{align}
Now pick an arbitrary $u$ in $\mathcal{A}_\phi(\X)$. By approximation, e.g., using Theorem 1.1 in \cite{HM-C} there is a sequence 
of ${\bf C}^{\infty}$ diffeomorphisms $(v^k)$ so that $v^k-u \in W^{1,2}_0(\X,\R^2)$ with $v_k \rightarrow u$ uniformly on $\overline \X$ 
and strongly in $W^{1,2}$. Hence,
\begin{align}
f_k:= \Phi(\abs{v^k})^2 \frac{v^k \times v^k_\theta}{|x|} \rightarrow \Phi(\abs{u})^2 \frac{u \times u_\theta}{|x|} =:f, 
\end{align}
{\it a.e.} in $\X$. Note that $\abs{f_k}\leq c \abs{v^k_\theta}$, $\abs{f}\leq c \abs{u_\theta}$ for some $c>0$ and so $f_k,f \in L^2(\X)$ 
and by virtue of $v_k \rightarrow u$ in $W^{1,2}$ and dominated convergence, for each $r\in(a,b)$ and $0<\delta<b-r$, we have
\begin{align}
\int_r^{r+\delta}\int_0^{2\pi} \Phi(\abs{v^k})^2(v^k\times v^k_{\theta}) \, d\theta dr \rightarrow 
\int_r^{r+\delta}\int_0^{2\pi} \Phi(\abs{u})^2(u\times u_{\theta}) \, d\theta dr. \label{formula14}
\end{align}
In a similar spirit we have (suppressing the arguments of $\Phi$ for brevity) 
\begin{align}
h_k:=\left[\abs{v^k} \Phi\dot{\Phi}+\Phi^2 \right] \det \nabla v^k  \rightarrow \left[\abs{u} \Phi\dot{\Phi}+\Phi^2 \right]  \det \nabla u =:h,
\end{align}
{\it a.e.} in $\X$. Again since $\abs{h_k} \leq c \abs{v^k_{x_1}} \abs{v^k_{x_2}}$, $\abs{h}\leq c \abs{u_{x_1}} \abs{u_{x_2}}$ for some 
$c>0$ we have $h_k,h\in L^1(\X)$ and so by dominated convergence
\begin{align}
\int_{\X_r}\left[\abs{v^k} \Phi\dot{\Phi}+\Phi^2 \right] \det \nabla v^k \, dx \rightarrow 
\int_{\X_r}\left[\abs{u} \Phi\dot{\Phi}+\Phi^2 \right]  \det \nabla u \, dx. \label{formula15}
\end{align}
Now combining \eqref{formula16} and \eqref{formula14} together with the fact that 
$u=v^k=\phi$ on $\partial \X$ it follows that
 \begin{align*}
2\int_r^{r+\delta} \int_{\X_r}\left[\abs{v^k} \Phi\dot{\Phi}+\Phi^2 \right] \det \nabla v^k \, dx dr \rightarrow 
 \int_r^{r+\delta}\int_0^{2\pi} \Phi(\abs{u})^2(u\times u_{\theta}) \, d\theta \, \bigg |_a^r dr.
 \end{align*}
Moreover \eqref{formula15} and a final application of dominated convergence gives
\begin{align*}
2 \int_r^{r+\delta} \int_{\X_r}\left[\abs{u} \Phi\dot{\Phi}+\Phi^2 \right]  \det \nabla u \, dx dr 
= \int_r^{r+\delta} \int_0^{2\pi}  \Phi(\abs{u})^2(u\times u_{\theta}) \, d\theta \, \bigg |_a^r dr.
\end{align*}
Therefore the result follows by recalling that $\det \nabla u = 1 $ $a.e.$ in $\X$ and applying the Lebsegue differentiation 
theorem, i.e., dividing by $\delta$ and letting $\delta\searrow 0$. \hfill $\square$
\\[1 mm]

Note that we can write the conclusion of the above proposition, namely, the integral identity \eqref{formula1} in a shorter and somewhat more suggestive form 
\begin{align}
\int_0^{2\pi} \Phi(\abs{u})^2(u\times u_{\theta}) \, d\theta \, \bigg|_a^r = \int_{\X_r} \abs{u}^{-1}\dot{\Gamma}(\abs{u}) \,dx,  \label{formula11}
\end{align} 	
where $\Gamma (t) = t^2 \Phi(t)^2$ for $\Phi \in {\bf C}^1[a,b]$. With this result and formulation at our disposal we can now prove the earlier 
relation $(\ref{postponed})$ as a specific proposition. 
\begin{prop}\label{invariant}
{Taking $\Phi(t)= 1/t$ in the above gives the integral identity }
\begin{align}
\int_0^{2\pi}  \frac{{u(r,\theta)\times u_{\theta}(r,\theta)}}{\abs{u}^2} \, d\theta = 2\pi, \qquad a.e. \,\, r\in[a,b]. 
\end{align}
\end{prop}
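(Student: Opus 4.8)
The plan is simply to specialise the identity \eqref{formula11} from Proposition~\ref{ann} to the particular test function $\Phi(t) = 1/t$, which is clearly of class ${\bf C}^1[a,b]$ since $0 < a \le t \le b$. With this choice one has $\Gamma(t) = t^2 \Phi(t)^2 \equiv 1$ on $[a,b]$, so $\dot\Gamma \equiv 0$ and the right-hand side of \eqref{formula11} vanishes identically for every $r \in (a,b)$. Hence for a.e.\ $r \in [a,b]$,
\begin{equation*}
\int_0^{2\pi} \frac{u(r,\theta)\times u_\theta(r,\theta)}{\abs{u}^2}\, d\theta\, \bigg|_a^r = 0,
\end{equation*}
that is, the function $r \mapsto \int_0^{2\pi} \abs{u}^{-2}(u \times u_\theta)\, d\theta$ is (a.e.\ equal to) a constant on $[a,b]$.

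It then remains to identify that constant as $2\pi$, and the natural place to read it off is the boundary value $r = a$ (or equivalently $r = b$), where $u(a,\theta) = \phi(a,\theta) = a(\cos\theta,\sin\theta)$ in the sense of traces. For the identity map one computes directly $u \times u_\theta = a^2$ and $\abs{u}^2 = a^2$, so the integrand equals $1$ and the integral equals $2\pi$. The only subtlety is making rigorous the passage from the a.e.\ statement on $(a,b)$ to the value at the endpoint: this is where the approximation argument already carried out in the proof of Proposition~\ref{ann} does the work for us. Indeed, in that proof one fixes $r \in (a,b)$, integrates the identity \eqref{formula16} over $(r,r+\delta)$, and passes to the limit in the approximating diffeomorphisms $v^k$ which satisfy $v^k = \phi$ on $\partial\X$ exactly; since for those maps the boundary term at $|x| = a$ is literally $\int_0^{2\pi} \Phi(|u|)^2 (u\times u_\theta)\,d\theta|_{|x|=a} = 2\pi$ with $\Phi(t)=1/t$, this value is inherited in the limit. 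So \eqref{formula1}, written with the $a$-endpoint contribution kept explicit rather than absorbed into the notation $|_a^r$, already encodes the normalisation.

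Putting the two observations together: for a.e.\ $r \in [a,b]$,
\begin{equation*}
\int_0^{2\pi} \frac{u(r,\theta)\times u_\theta(r,\theta)}{\abs{u}^2}\, d\theta = \int_0^{2\pi} \frac{\phi\times \phi_\theta}{\abs{\phi}^2}\,d\theta\,\bigg|_{|x|=a} + \int_{\X_r} \abs{u}^{-1}\dot\Gamma(\abs{u})\,dx = 2\pi + 0 = 2\pi,
\end{equation*}
which is the claim. The main obstacle — and it is a mild one — is purely bookkeeping: the compact notation $|_a^r$ in \eqref{formula1} and \eqref{formula11} suppresses the boundary contribution at $r=a$, so one must return to the derivation in Proposition~\ref{ann} to confirm that this suppressed term is exactly $2\pi$ (and not, say, something that also had to be argued away). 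Once that is checked, nothing further is needed; no new analytic input beyond Proposition~\ref{ann} is required.
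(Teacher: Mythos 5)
Your proposal is correct and follows essentially the same route as the paper: specialise Proposition~\ref{ann} (equivalently, \eqref{formula11}) to $\Phi(t)=1/t$, note that $\Gamma\equiv 1$ so $\dot\Gamma\equiv 0$, and then read off the constant from the boundary value at $r=a$ using $u=\phi$ there. Your extra care about how the inner-boundary contribution is encoded in the approximation argument of Proposition~\ref{ann} is a valid and slightly more explicit version of what the paper states tersely, but it does not constitute a different approach.
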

\begin{proof}
When $\Phi(t) = 1/t$ it can be easily seen that $\dot{\Gamma}(t)=0$ and therefore Proposition \ref{ann} gives that,
\begin{align}
\int_0^{2\pi} \frac{(u\times u_{\theta})}{\abs{u}^2} \, d\theta \, \bigg|_a^r = 0, \quad a.e. \,\,r\in [a,b]. \label{formula18}
\end{align}
Then recalling that $u(x)=x$ on $\partial \X$, i.e. when $\abs{x}=a$, it can be seen that the integral over the inner boundary is $2\pi$ and 
hence from \eqref{formula18} this implies that,
\begin{align}
\int_0^{2\pi} \frac{(u\times u_{\theta})}{\abs{u}^2} \, d\theta \,  = 2\pi, \quad a.e. \,\, r\in [a,b], 
\end{align}
which completes the proof.
\end{proof}

\begin{prop}\label{invariant4} 
Suppose $\Gamma \in {\bf C}^2[a,b]$ such that $\dot{\Gamma}(t)/t$ is a monotone increasing function. Then for almost every $r\in[a,b]$ we have that
\begin{align}
\int_0^{2\pi} \Gamma(\abs{u})^2 \frac{({u(r,\theta)\times u_{\theta}(r,\theta)})^2 }{\abs{u}^4} \, d\theta \geq 2\pi \Gamma(r)^2. 
\end{align}
\end{prop}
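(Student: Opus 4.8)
The plan is to bound the integrand below by first reducing it, via Jensen's inequality, to a \emph{linear} functional of $u\times u_\theta$, then evaluating that functional exactly by means of the identity $(\ref{formula11})$ established in Proposition~\ref{ann}, and finally controlling the resulting area integral by a layer-cake rearrangement argument resting on the equal-distribution property $|E_t|=|F_t|$ already used in the proof of Proposition~\ref{sym}. Concretely, since $\Gamma(|u|)^2(u\times u_\theta)^2|u|^{-4}=\bigl(\Gamma(|u|)(u\times u_\theta)|u|^{-2}\bigr)^2$, convexity of $s\mapsto s^2$ gives, for a.e. $r\in[a,b]$,
\begin{equation*}
\int_0^{2\pi}\Gamma(|u|)^2\frac{(u\times u_\theta)^2}{|u|^4}\,d\theta\ \ge\ \frac{1}{2\pi}\left(\int_0^{2\pi}\frac{\Gamma(|u|)}{|u|^2}\,(u\times u_\theta)\,d\theta\right)^{2},
\end{equation*}
so it suffices to show that the inner integral is at least $2\pi\Gamma(r)$ and nonnegative -- which is automatic when $\Gamma\ge 0$, the relevant case since the $\Gamma$ occurring in $(\ref{formula11})$ are of the form $t^2\Phi(t)^2$ -- the right-hand side then being $\ge(2\pi\Gamma(r))^2/(2\pi)=2\pi\Gamma(r)^2$.

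For the inner integral I would invoke the identity $(\ref{formula11})$; the proof of Proposition~\ref{ann} -- the pull-back and Stokes computation with a $1$-form of the shape $f(|x|)(x_1\,dx_2-x_2\,dx_1)$, followed by $W^{1,2}$-approximation by smooth diffeomorphisms -- applies verbatim with $f(|x|)=\Gamma(|x|)|x|^{-2}$ and yields $(\ref{formula11})$ for any $\Gamma\in{\bf C}^2[a,b]$. Using that on $\{|x|=a\}$ one has $u=x$, hence $u\times u_\theta=|x|^2$ and $|u|=a$, so that the inner-boundary contribution equals $2\pi\Gamma(a)$, we obtain
\begin{equation*}
\int_0^{2\pi}\frac{\Gamma(|u|)}{|u|^2}\,(u\times u_\theta)\,d\theta\ =\ 2\pi\Gamma(a)+\int_{\X_r}\frac{\dot\Gamma(|u|)}{|u|}\,dx .
\end{equation*}
Since $\int_{\X_r}\dot\Gamma(|x|)|x|^{-1}\,dx=2\pi\int_a^r\dot\Gamma(s)\,ds=2\pi(\Gamma(r)-\Gamma(a))$, the desired lower bound reduces to the comparison $\int_{\X_r}\dot\Gamma(|u|)|u|^{-1}\,dx\ge\int_{\X_r}\dot\Gamma(|x|)|x|^{-1}\,dx$.

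This last comparison is the crux. Writing $\psi(t)=\dot\Gamma(t)/t$, which is nondecreasing by hypothesis and lies in ${\bf C}^1[a,b]$ with $\dot\psi\ge 0$, I would expand $\psi(t)=\psi(a)+\int_a^b\dot\psi(s)\,\mathbf{1}_{\{s<t\}}\,ds$ and integrate over $x\in\X_r=F_r$, where $F_s=\{x\in\X:|x|\le s\}$ and $E_s=\{x\in\X:|u(x)|\le s\}$, to get
\begin{equation*}
\int_{F_r}\psi(|u|)\,dx-\int_{F_r}\psi(|x|)\,dx\ =\ \int_a^b\dot\psi(s)\bigl(|F_r\cap F_s|-|F_r\cap E_s|\bigr)\,ds .
\end{equation*}
As the $F_s$ are nested one has $|F_r\cap F_s|=\min(|F_r|,|F_s|)$, whereas $|F_r\cap E_s|\le\min(|F_r|,|E_s|)=\min(|F_r|,|F_s|)$ because $|E_s|=|F_s|$ (a consequence of $\det\nabla u=1$, exactly as in the proof of Proposition~\ref{sym}); hence the bracket is $\ge 0$ and, since $\dot\psi\ge 0$, the whole integral is $\ge 0$. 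Combining the three steps gives $\int_0^{2\pi}\Gamma(|u|)|u|^{-2}(u\times u_\theta)\,d\theta\ge 2\pi\Gamma(r)$, and squaring (using $\Gamma(r)\ge 0$) and inserting into the first display yields the assertion. I expect the main obstacle to be precisely that the comparison has to hold on the \emph{sub}-annulus $\X_r$, on which $u$ need not be a self-map, so a direct change of variables is unavailable; the estimate $|F_r\cap E_s|\le\min(|F_r|,|F_s|)$ together with the monotonicity of $\dot\Gamma(t)/t$ is what makes it work. A subsidiary technical point is quoting $(\ref{formula11})$ for a general ${\bf C}^2$ function $\Gamma$ rather than one literally of the form $t^2\Phi^2$ with $\Phi\in{\bf C}^1$; this is dispatched by re-running the argument of Proposition~\ref{ann} directly with the $1$-form above, or by the harmless replacement $\Gamma\mapsto\Gamma+\varepsilon$ and $\varepsilon\downarrow 0$.
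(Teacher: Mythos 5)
Your proposal is correct and follows essentially the same route as the paper: apply Jensen to reduce to bounding the linear integral $\int_0^{2\pi}\Gamma(|u|)|u|^{-2}(u\times u_\theta)\,d\theta$, convert this via the identity $(\ref{formula11})$ of Proposition~\ref{ann} into the area integral $\int_{\X_r}\dot\Gamma(|u|)|u|^{-1}\,dx$, and compare it with $\int_{\X_r}\dot\Gamma(|x|)|x|^{-1}\,dx$ via a layer-cake argument resting on the monotonicity of $\dot\Gamma(t)/t$ and the equidistribution $|E_s|=|F_s|$. The only structural difference is that you estimate directly on $\X_r$ using $|F_r\cap E_s|\le\min(|F_r|,|F_s|)=|F_r\cap F_s|$, whereas the paper bounds the integral over the complementary annulus $\X_r^b$ and then subtracts from the full-domain identity of Proposition~\ref{prop1}; your direct version is slightly cleaner (and sidesteps a minor imprecision in the paper's middle display, where the restricted distribution function of $|x|$ on $\X_r^b$ is written as $\alpha_x(t)$), but it is the same argument, including the appeal to nonnegativity of $\Gamma$ when squaring.
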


\begin{proof} First we note that by Proposition \ref{ann} we have for {\it a.e.} $r \in [a, b]$ that 
\begin{align}
\int_0^{2\pi} \Gamma (\abs{u}) \frac{u\times u_{\theta}}{\abs{u}^2} \, d\theta \, \bigg |_a^r= \int_{\X_r} \abs{u}^{-1} \dot{\Gamma}(\abs{u})  dx. 
\end{align}
As $\dot{\Gamma}(t)/t$ is monotone increasing and $\abs{u}$ and $\abs{x}$ share the same distribution function $\alpha_{x}(t)$ we have that, 
\begin{align}
\int_{\X^b_r} \abs{u}^{-1} \dot{\Gamma}(\abs{u}) \, dx & = \int_a^b \frac{d}{d t} \left(\frac{\dot{\Gamma}(t)}{t}\right) 
\int_{\X} \chi_{\lbrace x \in\X_r^b : \abs{u}(x)>t \rbrace} \,dx \, dt + \abs{\X_r^b} \frac{\dot{\Gamma}(a)}{a}  \nonumber\\
&  \leq\int_a^b \alpha_{x}(t) \frac{d}{d t} \left(\frac{\dot{\Gamma}(t)}{t}\right) dt + \abs{\X_r^b} \frac{\dot{\Gamma}(a)}{a} \nonumber \\
& = \int_{\X_r^b}    \abs{x}^{-1} \dot{\Gamma}(\abs{x}) \, dx \nonumber \\
&= 2\pi \left[ \Gamma(b)-\Gamma(r)\right].\label{Gamma1}
\end{align}
In the above $\X_r^b =\lbrace x \in\X : r< \abs{x}<b \rbrace$. Now since,
\begin{align}
2\pi \left[ \Gamma(b)-\Gamma(a)\right]= \int_{\X}   \abs{x}^{-1} \dot{\Gamma}(\abs{x}) dx = \int_{\X} \abs{u}^{-1} \dot{\Gamma}(\abs{u})  dx, \label{Gamma2}
\end{align}
by Proposition \ref{prop1} we obtain upon using \eqref{Gamma1} in \eqref{Gamma2} that,
\begin{align}
 \int_{\X_r} \abs{u}^{-1} \dot{\Gamma}(\abs{u})  dx \geq \int_{\X_r} \abs{x}^{-1} \dot{\Gamma}(\abs{x})\, dx = 2\pi \left[\Gamma(r)-\Gamma(a)\right]. 
\end{align}
Therefore,
\begin{align}
\int_0^{2\pi} \Gamma (\abs{u}) \frac{u\times u_{\theta}}{\abs{u}^2} \, d\theta \, \bigg |_a^r\geq 2\pi  \left[\Gamma(r)-\Gamma(a)\right], 
\end{align}
but from the identity boundary conditions on $u$ we know that,
\begin{align}
\int_0^{2\pi} \Gamma (\abs{u}) \frac{u\times u_{\theta}}{\abs{u}^2} \, d\theta \, \bigg |_a = 2\pi\Gamma(a), 
\end{align}
and therefore,
\begin{align}
\int_0^{2\pi} \Gamma (\abs{u}) \frac{u\times u_{\theta}}{\abs{u}^2} \, d\theta\, \bigg |_r \geq 2 \pi \Gamma(r), \qquad a.e. \,\, r\in[a,b].
\end{align}
The result then follows from an application of Jensen's inequality. 
\end{proof}

With the aid of these bounds we can now move on to the main goal of the section, namely, formulating and proving minimality for twist maps 
in homotopy classes of ${\mathcal A}_\phi(\X)$ for a larger class of energies than those considered earlier.

\begin{thm}\label{mainthm}
Let $\X=\X[a, b] \subset \R^2$ and let $\mathbb{H}={\mathbb H}[u; \X]$ denote the energy functional,
\begin{align}
\mathbb{H}[u;\X] &= \frac{1}{2} \int_{\X} \Phi (\abs{u}) \left[ \abs{\nabla \abs{u}}^2  + 
\frac{(u\times u_{\theta})^2}{r^2\abs{u}^2} \right] +\frac{(u\times u_r)^2}{\abs{u}^4} \, dx,
\end{align} 
where $u$ lies in $\mathcal{A}_{\phi}(\X)$ and $\Phi (t) = t^{-2}\Gamma(t)^2$ with $\Gamma(t)\in {\bf C}^2[a,b]$ such that $\dot{\Gamma}(t)/t$ 
is monotone increasing. Then for any $u \in A_k$ $($$k \in \Z$$)$ there exists a twist map 
$\bar{u} = \bar u_k=Q[g] x$ defined by the same symmetrisation as in $(\ref{sym})$ such that,
\begin{align}
\mathbb{H}[\bar{u}; \X] \leq \mathbb{H}[u; \X]
\end{align}
whilst $\bar u \in A_k$. 
\footnote{Note that taking $\Phi(t)= 1/t^2$, i.e., $\Gamma(t) = 1$, gives $\mathbb{H}=\mathbb{F}$.}
\end{thm}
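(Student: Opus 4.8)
The plan is to mirror the proof of Proposition~\ref{sym}, splitting $\mathbb{H}$ into the three integral pieces appearing in its integrand, computing $\mathbb{H}[\bar u;\X]$ exactly, and then bounding each piece of $\mathbb{H}[u;\X]$ from below by the corresponding piece of $\mathbb{H}[\bar u;\X]$. First I would record that for the symmetrised twist $\bar u = Q[g]x$ with $Q=Q[g]={\bf R}[g(r)]$ the elementary polar identities $|\bar u|=r$, $|\nabla|\bar u||^2=1$, $\bar u\times\bar u_\theta = r^2$ and $\bar u\times\bar u_r = -r^2\dot g(r)$ give
\[
\mathbb{H}[\bar u;\X] = \frac12\int_0^{2\pi}\!\!\int_a^b\bigl[2\Phi(r) + \dot g(r)^2\bigr]\,r\,dr\,d\theta = 2\pi\int_a^b r\Phi(r)\,dr + \pi\int_a^b r\dot g(r)^2\,dr.
\]
Exactly as in Proposition~\ref{sym}, $g\in W^{1,2}[a,b]$ with $g(a)=0$ and $g(b)=2\pi k$ (since $u\in A_k$), so $\bar u$ is an admissible twist map lying in $A_k$ by \eqref{degkZ}; this settles the homotopy claim.

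For the lower bound on $\mathbb{H}[u;\X]$ the two ``outer'' pieces are handled as in the $\mathbb{F}$ case. For the $(u\times u_r)^2/|u|^4$ term, Jensen's inequality applied for a.e.\ $r$ with respect to $d\theta/2\pi$ gives $\int_0^{2\pi}(u\times u_r)^2/|u|^4\,d\theta \ge 2\pi\dot g(r)^2$, where $\dot g(r) = (2\pi)^{-1}\int_0^{2\pi}(u\times u_r)/|u|^2\,d\theta$, hence $\tfrac12\int_\X (u\times u_r)^2/|u|^4\,dx \ge \pi\int_a^b r\dot g^2\,dr$. For the $\Phi(|u|)(u\times u_\theta)^2/(r^2|u|^2)$ term I would first use $\Phi(t)=t^{-2}\Gamma(t)^2$ to rewrite the angular integrand as $\Gamma(|u|)^2(u\times u_\theta)^2/|u|^4$, so that Proposition~\ref{invariant4} applies directly and yields $\int_0^{2\pi}\Gamma(|u|)^2(u\times u_\theta)^2/|u|^4\,d\theta \ge 2\pi\Gamma(r)^2$ for a.e.\ $r$; dividing by $r$ and using $\Gamma(r)^2/r = r\Phi(r)$ gives $\tfrac12\int_\X\Phi(|u|)(u\times u_\theta)^2/(r^2|u|^2)\,dx \ge \pi\int_a^b r\Phi(r)\,dr$.

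The genuinely delicate piece is the $\Phi(|u|)|\nabla|u||^2$ term, for which I would establish the sharp bound $\int_\X\Phi(|u|)|\nabla|u||^2\,dx \ge 2\pi\int_a^b r\Phi(r)\,dr$. The trick is to use $\Phi$ itself as the Cauchy--Schwarz weight: by Cauchy--Schwarz in $L^2(\X)$,
\[
\Bigl(\int_\X\Phi(|u|)|\nabla|u||\,dx\Bigr)^2 \le \int_\X\Phi(|u|)\,dx \;\cdot\; \int_\X\Phi(|u|)|\nabla|u||^2\,dx .
\]
By Proposition~\ref{prop1} the first factor on the right equals $2\pi\int_a^b r\Phi(r)\,dr$. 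For the left-hand side the coarea formula with $f=|u|$ and the nonnegative Borel weight $\Phi(|u|)$ gives $\int_\X\Phi(|u|)|\nabla|u||\,dx = \int_a^b\Phi(t)\,\mathcal{H}^1(\{|u|=t\})\,dt$; exactly as in Proposition~\ref{sym}, incompressibility forces $\{|u|\le t\}$ and $\{|x|\le t\}$ (with $u$ extended by the identity on $\{|x|<a\}$) to enclose equal area, so the isoperimetric inequality yields $\mathcal{H}^1(\{|u|=t\})\ge 2\pi t$ for a.e.\ $t\in[a,b]$, and since $\Phi\ge 0$ this gives $\int_\X\Phi(|u|)|\nabla|u||\,dx \ge 2\pi\int_a^b r\Phi(r)\,dr$. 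Combining the two (the case $\Gamma\equiv 0$ being trivial) yields the claimed bound, hence $\tfrac12\int_\X\Phi(|u|)|\nabla|u||^2\,dx \ge \pi\int_a^b r\Phi(r)\,dr$. Adding the three lower bounds reproduces exactly $\mathbb{H}[\bar u;\X]$, giving $\mathbb{H}[u;\X]\ge\mathbb{H}[\bar u;\X]$.

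I expect the main obstacle to be the $\Phi(|u|)(u\times u_\theta)^2/(r^2|u|^2)$ piece, which is precisely where the hypothesis that $\dot\Gamma(t)/t$ be monotone increasing is used --- it enters only through Proposition~\ref{invariant4}; by contrast the $|\nabla|u||^2$ piece works for any nonnegative weight of the form $\Gamma(t)^2/t^2$, and the exact matching of constants there rests on the (at first sight arbitrary) choice of $\Phi$ itself, rather than some other function of $\Phi$, as the weight in the Cauchy--Schwarz step. A secondary technical point worth verifying is that all weights invoked lie in the required regularity classes: $\Phi\in{\bf C}^2[a,b]$ since $t\ge a>0$, so Proposition~\ref{prop1} applies, while $\Phi(|u|)$ is continuous, hence Borel, as needed for the coarea formula.
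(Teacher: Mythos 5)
Your proposal is correct and follows essentially the same route as the paper: you split $\mathbb H$ into the same three pieces, bound the radial-cross term via Jensen as in Proposition~\ref{sym}, bound the angular-cross term via Proposition~\ref{invariant4}, and establish the $\Phi(|u|)|\nabla|u||^2$ lower bound through the identical chain of Cauchy--Schwarz (with $\Phi$ as weight), the coarea formula, the isoperimetric inequality, and Proposition~\ref{prop1}. The only difference is cosmetic — you compute $\mathbb H[\bar u;\X]$ explicitly and match constants term by term, whereas the paper states the three inequalities in terms of $\int_\X\Phi(|x|)\,dx$ and the symmetrised integrals directly.
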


\begin{proof} As the first step in the proof we wish to prove the inequality 
\begin{align} \label{desired-main-thm-ineq}
\int_{\X} \Phi(\abs{x}) \, dx  = \int_{\X} \Phi(\abs{\bar{u}(x)}) \abs{\nabla\abs{\bar{u}(x)}}^2 \, dx \leq \int_{\X} \Phi(\abs{u}) \abs{\nabla\abs{{u}(x)}}^2 \, dx. 
\end{align}

In order to do this we again need to apply the isoperimetric inequality, the coarea formula for Sobolev functions as in the proof of Proposition \ref{sym} 
and then the integral identity $(\ref{prop1})$. Thus we proceed by writing 
\begin{align}
\int_{\X} \Phi(\abs{u}) \abs{\nabla\abs{{u}(x)}} \, dx &=  \int_a^b \Phi(t)\mathcal{H}^1(\abs{u}=t) \, dt \nonumber \\
& \geq \int_a^b \Phi(t)\mathcal{H}^1(\abs{x}=t) \, dt \nonumber \\
&= \int_{\X} \Phi(\abs{x}) \, dx. 
\end{align} 
Therefore it follows from basic considerations that 
\begin{align}
\left(\int_{\X} \Phi(\abs{x}) dx\right)^2 &\leq \left(\int_{\X} \Phi(\abs{u}) \abs{\nabla\abs{{u}(x)}} \, dx \right)^2 \nonumber \\
 & \leq \int_{\X} \Phi(\abs{u}) \abs{\nabla\abs{{u}(x)}}^2 dx \int_{\X} \Phi(\abs{u}) \, dx \nonumber \\
 & = \int_{\X} \Phi(\abs{u}) \abs{\nabla\abs{{u}(x)}}^2 dx \int_{\X} \Phi(\abs{x}) \, dx, 
\end{align}
and so rearranging gives the desired inequality (\ref{desired-main-thm-ineq}), namely, 
\begin{align}
\int_{\X} \Phi(\abs{x}) dx \leq \int_{\X} \Phi(\abs{u}) \abs{\nabla\abs{{u}(x)}}^2 \, dx. 
\end{align}
Next we proceed by writing  
\begin{align}
\int_{\X} \Phi(\abs{u})\frac{(u\times u_{\theta})^2}{r^2\abs{u}^2} \, dx &= 
\int_a^b \frac{1}{r} \int_0^{2\pi} \Phi(\abs{u})\frac{(u\times u_{\theta})^2}{\abs{u}^2} \, d\theta dr.
\end{align} 
As ${\Phi(t)} = t^{-2}\Gamma(t)^2$ it follows upon noting Proposition $\ref{invariant4}$ that we have 
\begin{align*}
\frac{1}{2\pi}\int_0^{2\pi}  \Phi(\abs{u})\frac{(u\times u_{\theta})^2}{\abs{u}^2} d\theta &= \frac{1}{2\pi}\int_0^{2\pi} \Gamma(\abs{u})^2\frac{{(u\times u_{\theta})^2}}{\abs{u}^4} d\theta\\
&\geq  \Gamma(r)^2 = \Phi(r) r^2, \quad a.e. \,\, r\in[a,b].
\end{align*}
Hence by combining the above it follows that 
\begin{align}
\int_{\X} \Phi(\abs{u})\frac{(u\times u_{\theta})^2}{r^2\abs{u}^2} \, dx  \geq 2\pi \int_a^b {\Phi(r)}{r} \, dr 
= \int_{\X} \Phi(\abs{\bar{u}})\frac{(\bar{u}\times \bar{u}_{\theta})^2}{r^2\abs{\bar{u}}^2} \, dx.
\end{align}
Therefore with the above at our disposal all that remains is to use the inequality 
\begin{align}
\int_{\X} \frac{(u\times u_r)^2}{\abs{u}^4} \, dx \geq \int_{\X}  \frac{(\bar{u}\times \bar{u}_r)^2}{\abs{\bar{u}}^4} \, dx, 
\end{align}
whose proof proceeds similar to that of Proposition \ref{sym} by using the 
same angle of rotation function (\ref{sym2}) in defining $\bar{u}$. This therefore completes the proof. 
\end{proof}

\section{Measure preserving self-maps and twists on solid tori}
\setcounter {equation}{0}

In this section we propose and study extensions of twist maps to a larger class of domains. Recalling that an $n$-dimensions annulus takes the 
form $\X = [a,b] \times \Sp^{n-1}$ the natural extension here would be domains of the product type $\X=\B^m \times \Sp^{n-1}$ (with $m \ge 1$, 
$n \ge 2$) embedded in $\R^{m+n}$. Twist maps in turn will be suitable measure preserving self-maps of $\overline \X$ that agree with the 
identity map $\phi$ on the boundary $\partial \X$ ({\it see} \cite{TA2, TA4}). To keep the discussion tractable we confine ourselves here to the 
case $m+n=3$. We proceed by first considering the solid torus ${\bf T} \cong \mathbb{B}^2 \times \Sp^1$ embedded in $\R^3$ as (see Fig. 1):
\begin{equation}
{\bf T} = \bigg\{ x=(x_1, x_2, x_3) :   ( \sqrt{x_1^2+x_2^2} - \rho )^2 + x_3^2 = r^2 , \quad 0 \le r < 1 \bigg \rbrace. 
\end{equation}
Here ${\bf T}={\bf T}_\rho$ and the fixed parameter $\rho$ is chosen $\rho>1$ to avoid self-intersection. Now let us set 
$\mu = \sqrt{x_1^2+x_2^2}- \rho$. Then ${\bf T}$ above can be represented as
\begin{align}
0 \le \mu^2 + x_3^2  = r^2 < 1.
\end{align}   
From now on $(\mu, x_3)$ is the preferred choice of co-ordinates for $\B=\mathbb{B}_1^2$  where 
$\B=\lbrace (\mu,x_3)\in \R^2 : \mu^2+x_3^2 < 1 \rbrace$ is unit ball in the $(\mu, x_3)$ plane. In polar co-ordinates we have $\mu = r \cos\theta$, 
$x_3 = r\sin \theta$ and upon noting $\mu = \sqrt{x_1^2+x_2^2}- \rho$ we have $(x_1, x_2)$ as the co-ordinates of a sphere of radius 
$\rho + r\cos\theta$:
\begin{align}
x_1 = (\rho + r\cos\theta) \cos\phi, \quad x_2 = (\rho + r\cos\theta) \sin\phi. 
\end{align}
For our purposes in this section we shall write the above co-ordinate system in the following way,
\begin{align}
\begin{cases}
x_1 = (\mu+\rho)\cos\phi \\
x_2 = (\mu+\rho) \sin\phi\\
x_3 = x_3.
\end{cases}
\end{align}
Now with the above notation in place we can define the desired twist maps on the solid torus $\mathbf{T}$ as
\footnote{Note that the aim here is to seek non-trivial extremising twist maps for the energy functional ${\mathbb F}$ over the admissible 
class of maps ${\mathcal A}_\phi(\mathbf{T})$.},
\begin{align} \label{twist-solid-torus-equation}
u(x) & = Q(\mu,x_3) x, \qquad x \in {\bf T}, 
\end{align}
where the rotation matrix $Q$ in ${\bf SO}(3)$ takes the explicit form 
\begin{align} \label{Q-matrix-equation}
Q(\mu,x_3) = \begin{bmatrix}
\cos g(\mu,x_3) & -\sin g(\mu,x_3) & 0 \\
\sin g(\mu,x_3) & \cos g(\mu,x_3)  & 0 \\
0 & 0 & 1
\end{bmatrix}.
\end{align}

Here the function $g=g(\mu,x_3)$ defines the angle of rotation as in the case for the annulus, however, in this case $g$ depends on the two variables 
$(\mu, x_3)$ and not just one $r=|x|$ as is the case for the annulus. There are two main reasons for this choice of representation of a twist map for 
$\mathbf{T}$, which we describe below. 
\begin{itemize}
\item Firstly in order to be consistent with twist maps for the annulus we require that the rotation matrix is an isometry of the boundary
\begin{align}
\partial{\bf T} = \bigg\{ x=(x_1, x_2, x_3) :   ( \sqrt{x_1^2+x_2^2} - \rho )^2 + x_3^2 = 1  \bigg \rbrace, 
\end{align}
with respect to the metric induced from its embedding in $\R^3$. (This is similar to what was done earlier in the case of an annulus). Then with this in mind the isometries 
of $\partial\mathbf{T}$ are $\mathbf{SO}(3)$ matrices of the form,
\begin{align}
Q = \begin{bmatrix}
\cos \varphi & -\sin \varphi & 0 \\
\sin \varphi & \cos \varphi  & 0 \\
0 & 0 & 1
\end{bmatrix}.
\end{align}
\item Secondly we allow the angle of rotation function $g$ here to depend on the two variables $(\mu, x_3)$ instead of one to incorporate all the "{\it ball}" 
variables in the product structure on ${\bf T}$. Note that $(\mu, x_3)$ collapses into $r=|x|$ in the annulus case as here one deals with an interval 
(a one dimensional ball). 
\end{itemize}
Now in preparation for the upcoming calculations let us denote $y=[x_1,x_2,0]^t$, $\vartheta = y/|y|$ and $g_{\mu}= \partial g / \partial \mu$. 
Then it is easily seen that 
\begin{align}
\begin{cases}
\nabla u = Q + \dot{Q}x \otimes \nabla g,\\
(\nabla u)(\nabla u)^t u = Qx + \langle \nabla g, x \rangle \dot{Q}x,\\
\abs{\nabla u}^2  = 3 +  (\mu+\rho)^2\abs{\nabla g}^2, \\ 
\abs{u}^2 =  (\mu+\rho)^2+x_3^2, \\
\det(\nabla u)  = \det \left(  Q + \dot{Q}x \otimes \nabla g \right)  \\
 \qquad \quad \,\,\,\,\, = 1 + \langle Q^t\dot{Q}x, \nabla g \rangle = 1.
\end{cases}\label{twistcalculations}
\end{align}
Note that the last equality results from the fact that the product $Q^t\dot{Q}$ is skew-symmetric and $\nabla g = [(\mu+\rho)^{-1}x_1 g_\mu,(\mu+\rho)^{-1}x_2 g_\mu, g_{x_3}]^t$. 
Therefore using the above it is seen that the energy for a twist maps is given by
\begin{align}
\mathbb{F}[u; \mathbf{T}] = \frac{1}{2}\int_{\bf T} \frac{|\nabla u|^2}{|u|^2} \, dx 
= \pi\int_{\B} \frac{3+(\mu+\rho)^2\abs{\nabla g}^2}{(\mu+\rho)^2+x_3^2} (\mu+\rho) \, d\mu dx_3. \label{Energytwisttorus}
\end{align}
Here we are using the fact that a change in the co-ordinates $(r,\theta,\phi)\rightarrow (\mu,x_3,\phi)$ results in a Jacobian factor of $\mu+\rho$ 
in the integral, i.e., 
\begin{align*}
\int_0^{2\pi}\int_0^{2\pi}\int_{-1}^1 r(\rho + r\cos \theta ) \, drd\theta d\phi = \int_0^{2\pi}\int_{\B} (\mu+\rho) \, d\mu d{x_3}d\phi.
\end{align*}
Hence $(\ref{Energytwisttorus})$ becomes,
\begin{align}
\mathbb{F}[u;\mathbf{T}] & = \pi \int_{\B}  \frac{(\mu+\rho)^3(g_\mu^2+g_{x_3}^2)}{(\mu+\rho)^2+x_3^2} \, d\mu d x_3 + \frac{3}{2}\int_{\mathbf{T}}\abs{x}^{-2}dx,
\end{align} 
where here the additional absolute constant does not affect the variational structure of ${\mathbb F}$. Now to derive the Euler-Lagrange equation 
associated to the energy integral on the right it suffices to take variations of $g=g(\mu, x_3)$ by some $\varphi \in \mathbf{C}^\infty_c(\B)$. This calculation leads to the following 
divergence form equation:
\begin{align}
\frac{\partial }{\partial \mu} \left( \frac{(\mu+\rho)^3 g_{\mu}}{(\mu+\rho)^2 + x_3^2}  \right)  +  
\frac{\partial }{\partial x_3} \left( \frac{(\mu+\rho)^3 g_{x_3}}{(\mu+\rho)^2 + x_3^2}  \right) = 0, \nonumber \\
\implies {\rm div} \frac{(\mu+\rho)^3 \nabla g}{(\mu+\rho)^2 + x_3^2} =0. \label{Eulertorustwist}
\end{align}

Evidently the identity boundary condition on $u$ translates into $g(z) = 2 k \pi$ for some fixed $k \in \Z$ and all $z=(\mu,x_3)\in \partial \B$. 
Now suppose $g$ solves $(\ref{Eulertorustwist})$. Then by an application of the divergence theorem 
it is seen that the only solution to this boundary value problem is the trivial one, namely, $g(\mu,x_3)=2\pi k$ for all $(\mu,x_3) \in \B$. 
Indeed
\begin{align}
0 &  = \int_{\B} {\rm div} \left(  \frac{(\mu+\rho)^3 \nabla g}{(\mu+\rho)^2 + x_3^2} \right) \, d\mu dx_3 \nonumber \\
& = \int_0^{2\pi}  \frac{(\cos \theta+\rho)^3}{(\cos \theta+\rho)^2 + x_3^2} \frac{\partial g}{\partial r}(1,\theta) \, d\theta. \label{identity1}
\end{align} 
Now again as $g$ solves $(\ref{Eulertorustwist})$ an application of the divergence theorem also gives 
\begin{align*}
 \int_{\B} \frac{(\mu+\rho)^3(g_\mu^2+g_{x_3}^2)}{(\mu+\rho)^2+x_3^2} \, d\mu dx_3 &= 
 \int_{\B} \left[ \frac{(\mu+\rho)^3(g_\mu^2+g_{x_3}^2)}{(\mu+\rho)^2+x_3^2} + g \, {\rm div} \frac{(\mu+\rho)^3\nabla g}{(\mu+\rho)^2 + x_3^2} \right] \, d\mu dx_3 \\
&=  \int_0^{2\pi}  g(1,\theta)\frac{(\cos \theta+\rho)^3}{(\cos \theta+\rho)^2 + x_3^2} \frac{\partial g}{\partial r}(1,\theta) \, d\theta \\
&= 2\pi k  \int_0^{2\pi}  \frac{(\cos \theta+\rho)^3}{(\cos \theta+\rho)^2 + x_3^2} \frac{\partial g}{\partial r}(1,\theta) \, d\theta = 0.
\end{align*}
Note that in obtaining the last identity we have used $(\ref{identity1})$ combined with the boundary condition satisfied by $g$, namely, 
$g(1,\theta)=2\pi k$ for $0\leq \theta \leq 2\pi$. Hence
\begin{align}
\int_\B \frac{(\mu+\rho)^3(g_\mu^2+g_{x_3}^2)}{(\mu+\rho)^2+x_3^2} \, d\mu dx_3 = 0. \label{identity2}
\end{align}

Now since by assumption $\rho>1$ we have $(\rho + \mu) >0$ as $\abs{\mu}=\abs{r\cos\theta} \leq r < 1$ and so $\mu>-1$. 
Hence $(\ref{identity2})$ gives $\abs{\nabla g}^2=0$ and thus $g(\mu,x_3)=2\pi k$; again by invoking the boundary condition on 
$g$. It therefore follows that here we have no non-trivial solutions. Interestingly note that this conclusion stems from one crucial 
difference between the annulus $\X$ and the solid torus $\mathbf{T}$ in that $\X$ has two boundary components whilst 
$\mathbf{T}$ only has one. It was precisely this difference that turned crucial in the application of the divergence theorem.

\begin{thm} There are no non-trivial twist solutions $(\ref{twist-solid-torus-equation})$ to the Euler-Lagrange equations 
associated with the energy functional ${\mathbb F}$ on a solid torus ${\bf T}$.
\end{thm}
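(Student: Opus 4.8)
The plan is to follow exactly the chain of identities already assembled in this section, so that the proof amounts to little more than stringing them together. Recall that a twist map $u(x) = Q(\mu, x_3)x$ on $\mathbf{T}$ as in $(\ref{twist-solid-torus-equation})$--$(\ref{Q-matrix-equation})$ is completely determined by its angle of rotation function $g = g(\mu, x_3)$, and that for such a map the incompressibility constraint $\det \nabla u = 1$ holds automatically by the skew-symmetry computation in $(\ref{twistcalculations})$. If in addition $u$ is to be an extremiser of ${\mathbb F}$ over ${\mathcal A}_\phi(\mathbf{T})$, then restricting ${\mathbb F}$ to the class of twist maps and taking inner variations $g \mapsto g + \varepsilon \varphi$ with $\varphi \in \mathbf{C}_c^\infty(\B)$ forces $g$ to satisfy the divergence-form Euler--Lagrange equation $(\ref{Eulertorustwist})$, namely ${\rm div}\,[(\mu+\rho)^3 (\mu+\rho)^{-2}(\mu+\rho)^{-2} \ldots]$ --- more precisely ${\rm div}\,\big( (\mu+\rho)^3 \nabla g / ((\mu+\rho)^2 + x_3^2) \big) = 0$ on $\B$, subject to the boundary condition $g \equiv 2\pi k$ on $\partial \B$ coming from $u|_{\partial\mathbf{T}} = \phi$.

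First I would state precisely that the twist $u$ is non-trivial exactly when $g$ is non-constant, and that it suffices to show the only solution of the above boundary value problem is $g \equiv 2\pi k$. Then I would carry out the weighted energy identity already displayed: multiply $(\ref{Eulertorustwist})$ by $g$, integrate over $\B$, and integrate by parts. The boundary term is $\int_{\partial\B} g\, w\, \partial g/\partial n$ with $w = (\mu+\rho)^3/((\mu+\rho)^2+x_3^2)$; since $g$ is constant equal to $2\pi k$ on $\partial\B$, this boundary term equals $2\pi k \int_{\partial \B} w\, \partial g/\partial n$, and the latter integral vanishes by $(\ref{identity1})$ (which is itself just the divergence theorem applied directly to $(\ref{Eulertorustwist})$). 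Hence $\int_\B w\,\abs{\nabla g}^2\, d\mu\, dx_3 = 0$ as in $(\ref{identity2})$.

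The final step is positivity of the weight. Since $\rho > 1$ and any point $(\mu, x_3) \in \B$ satisfies $\abs{\mu} = \abs{r\cos\theta} \le r < 1$, we have $\mu + \rho > \rho - 1 > 0$, so $w = (\mu+\rho)^3/((\mu+\rho)^2 + x_3^2) > 0$ throughout $\B$. Therefore $(\ref{identity2})$ forces $\nabla g \equiv 0$ on $\B$, so $g$ is constant, and invoking the boundary condition once more gives $g \equiv 2\pi k$. Consequently $Q(\mu, x_3) \equiv Q$ is a constant rotation by $2\pi k$, i.e. the identity, and $u \equiv \phi$ is the trivial twist. I do not expect any genuine obstacle here: the computation $(\ref{twistcalculations})$ and the two applications of the divergence theorem leading to $(\ref{identity1})$ and $(\ref{identity2})$ are routine, and the one structurally essential point --- that $\mathbf{T}$ has a single boundary component, so that the two-endpoint cancellation available for the annulus is absent and the only admissible boundary data for $g$ is a single constant --- is exactly what makes the energy identity collapse to $\nabla g \equiv 0$. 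The mild care needed is only to note that the twist ansatz makes the incompressibility constraint automatic, so no Lagrange multiplier enters the reduced variational problem and the Euler--Lagrange equation is genuinely $(\ref{Eulertorustwist})$.
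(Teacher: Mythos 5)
Your proposal follows the paper's argument essentially verbatim: derive $(\ref{Eulertorustwist})$ from inner variations of $g$ within the twist ansatz, multiply by $g$ and integrate by parts so that the boundary term factors as $2\pi k$ times the flux integral which vanishes by $(\ref{identity1})$, and then use positivity of the weight $(\mu+\rho)^3/((\mu+\rho)^2+x_3^2)$ (from $\rho>1$, $|\mu|<1$) to force $\nabla g \equiv 0$. The reasoning, including the observation that the single boundary component of $\mathbf{T}$ is what makes the boundary data a single constant and hence collapses the energy identity, matches the paper's proof.
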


\begin{figure}
\begin{center}
\includegraphics{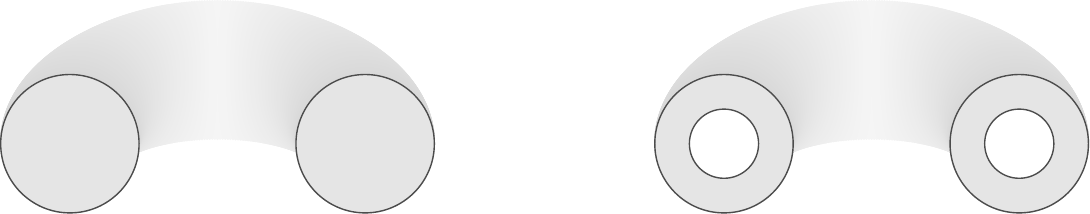}
\caption{The solid torus (left) has a connected space of self-maps $\mathfrak{A}(\mathbf{T})$, whereas for a "{\it thickened}" torus (right) 
the space of self-maps $\mathfrak{A}(\mathbb{T})$ has infinitely many, indeed, $\Z \oplus \Z$ components. (See \cite{TA2} and \cite{TA4} for more.)}
\end{center}
\end{figure}
\section{Twist maps on tori with disconnected double component boundary}

In contrast to what was seen above let us next move on to considering a "{\it thickened}" torus, that is, the domain obtained 
topologically by taking the product of a two-dimensional torus and an interval. Note that here the boundary of the resulting 
domain consists of two disjoint copies of the initial torus and is in particular not connected. Now for definiteness and to fix 
notation let us set ${\mathbb T}={\mathbb T}_\rho$ to be (see Fig. 1)
\begin{align}
{\mathbb T} = \bigg\{ x=(x_1, x_2, x_3)  : ( \sqrt{x_1^2+x_2^2} - \rho)^2 + x_3^2 = r^2 , \quad a < r < 1 \bigg\}.
\end{align}
Here $0<a<1<\rho$ are fixed and the aim is to seek non-trivial extremising twist maps for the energy functional ${\mathbb F}$ 
over the admissible class of maps ${\mathcal A}_\phi({\mathbb T})$. Using the same co-ordinate system as in the earlier case we see 
that the $(\mu,x_3)$ are the co-ordinates of the two dimensional annulus centred at the origin. Additionally for similar reasons to that 
discussed earlier we define twist maps on ${\mathbb T}$ as $u(x) = Q(\mu,x_3) x$ where the rotation matrix $Q=Q(\mu, x_3)$ 
in ${\bf SO}(3)$ is as (\ref{Q-matrix-equation}).

A straightforward calculation shows that the 
energy of a twist maps is given by the integral 
\begin{align}
\mathbb{F}[u; {\mathbb T}] &= \frac{1}{2}\int_{\mathbb T} \frac{|\nabla u|^2}{|u|^2} \, dx 
= \pi\int_{\B_1\backslash \overline{\B}_a} \frac{3+(\mu+\rho)^2\abs{\nabla g}^2}{(\mu+\rho)^2+x_3^2} (\mu+\rho) \, d\mu d x_3 \nonumber \\
& = \pi \int_{\B_1\backslash \overline{\B}_a}\frac{(\mu+\rho)^3(g_\mu^2+g_{x_3}^2)}{(\mu+\rho)^2+x_3^2} \, d\mu d x_3 + \frac{3}{2} \int_{\mathbb T} \abs{x}^{-2} dx. 
\label{Energytwisttorus2}
\end{align}
Similar to what was described earlier in obtaining the second equality we have used the integral identity
\begin{align*}
\int_0^{2\pi}\int_0^{2\pi}\int_{a}^1 r(\rho + r\cos(\theta)) \, drd\theta d\phi = \int_0^{2\pi}\int_{\B_1\backslash \overline{\B}_a }(\mu+\rho) \, d\mu d x_3 d\phi .
\end{align*}

The Euler-Lagrange equation can be obtained in the standard way by taking variations $\varphi \in \mathbf{C}^\infty_c(\B_1\backslash \overline{\B}_a)$ where 
$\B_1\backslash \overline{\B}_a=\lbrace (\mu,x_3)\in \R^2 : a^2<\mu^2+x_3^2 <1 \rbrace$. This calculation again leads to the Euler-Lagrange equation given 
by \eqref{Eulertorustwist} where we assume
%
%
%
without loss of generality that the boundary condition on the rotation angle function $g$ is set to $g(\mu,x_3)=0$ for $(\mu,x_3)\in \partial \B_a$ 
and $g(\mu,x_3)=2\pi k$ for $(\mu,x_3)\in \partial \B_1$ with $k \in \Z$. Therefore solutions to \eqref{Eulertorustwist} satisfy, 
\begin{align}
0 = \int_{\B_1\backslash\overline{\B}_a} {\rm div} \frac{(\mu+\rho)^3 \nabla g}{(\mu+\rho)^2 + x_3^2} \, dx 
= \int_{\partial \B_1} \frac{(\mu+\rho)^3 \nabla g\cdot n}{(\mu+\rho)^2 + x_3^2} -  
\int_{\partial \B_a}\frac{(\mu+\rho)^3 \nabla g\cdot n}{(\mu+\rho)^2 + x_3^2}.
\end{align}
Subsequently 
\begin{align*}
\frac{\mathbb{F}[u; {\mathbb T}] - 3/2 \int_{\mathbb T} \abs{x}^{-2} dx}{\pi} =& \int_{\B_1\backslash \overline{\B}_a} \bigg[ \frac{(\mu+\rho)^3\abs{\nabla g}^2}{(\mu+\rho)^2 + x_3^2}  
+ g\,{\rm div}\left( \frac{(\mu+\rho)^3 \nabla g}{(\mu+\rho)^2 + x_3^2} \right) \bigg] \, dx \nonumber \\
=&  \int_{\partial \B_1} \frac{g(\mu+\rho)^3 \nabla g\cdot n}{(\mu+\rho)^2 + x_3^2} \,d{\mathcal H}^1
-  \int_{\partial \B_a} \frac{g(\mu+\rho)^3 \nabla g \cdot n}{(\mu+\rho)^2 + x_3^2} \, d{\mathcal H}^1.
\end{align*}
Hence taking into account the boundary conditions, e.g. $g=0$ on $\partial \B_a$ and $g=2\pi k$ on $\partial \B_1$, we gain that if $g$ is a solution of $(\ref{Eulertorustwist})$ then,
\begin{align}
\frac{\mathbb{F}[u; {\mathbb T}] - 3/2 \int_{\mathbb T} \abs{x}^{-2} dx}{\pi} &= 2\pi k \int_{\partial \B_1} \frac{(\mu+\rho)^3 \nabla g\cdot n}{(\mu+\rho)^2 + x_3^2} \,d\mathcal{H}^1, 
\qquad k \in \Z. \label{identity3}
\end{align}

Evidently $(\ref{Eulertorustwist})$ with the stated boundary conditions has a unique solution. Indeed if $\overline g, \underline g$ are two solutions to 
$(\ref{Eulertorustwist})$ with $g=0$ on $\partial \B_a$ and $g=2\pi k$ on $\partial \B_1$ 
then $g=\overline g-\underline g$ solves $(\ref{Eulertorustwist})$ with $g=0$ on $\partial [\B_1\backslash \overline{\B}_a]$. Then by $(\ref{identity3})$
\begin{align} 
\int_{ \B_1\backslash \overline{\B}_a} \frac{(\mu+\rho)^3 \abs{\nabla g}^2}{(\mu+\rho)^2 + x_3^2}  \, dx = 0.
\end{align}

However in view of $\rho>1$ this gives $\abs{\nabla g}^2 \equiv 0$ and so invoking the boundary conditions $g=0$, i.e., $\overline g= \underline g$. 
As existence follows from standard arguments it follows that (\ref{Eulertorustwist}) has a unique smooth solution $g=g(\mu, x_3; k)$ for each $k \in \Z$.

\section{Euler-Lagrange analysis and twists as classical solutions}
\setcounter {equation}{0}

The goal of this section is to examine the solution $g=g(\mu, x_3; k)$ to $(\ref{Eulertorustwist})$ with the prescribed boundary conditions in relation to the 
Euler-Lagrange system (\ref{EL}) associated with ${\mathbb F}$ on ${\mathcal A}_\phi({\mathbb T})$. To this end recall that the system takes the form 
\begin{align}
\frac{(\nabla u)^t}{\abs{u}^2} \left[ {\Delta u} + \frac{\abs{\nabla u}^2}{\abs{u}^2} u - \frac{2}{\abs{u}^2}  \nabla u (\nabla u)^tu \right] &= \nabla p.
\end{align}
For the ease of notation from now on we shall set $\xi = \mu + \rho$. Hence using the identities 
$(\ref{twistcalculations})$ we have  
\begin{align}
\begin{cases}
(\nabla u)(\nabla u)^t u = Qx + \langle \nabla g,x \rangle \dot{Q}x,\\
\abs{\nabla u}^2 \abs{u}^{-2}  = (3 + \xi^2 \abs{\nabla g}^2) \abs{x}^{-2}, \\
\Delta u = 2 \xi^{-1} g_\xi \dot{Q}x + \Delta g \dot{Q}x + \abs{\nabla g}^2 \ddot{Q}x. 
\end{cases} \label{twistcalculations2}
\end{align}
Therefore from $(\ref{twistcalculations2})$ and a basic calculation we obtain
\begin{align*}
{\Delta u} + \frac{\abs{\nabla u}^2}{\abs{u}^2} u - \frac{2}{\abs{u}^2}  \nabla u (\nabla u)^tu & = \left(\frac{2g_{\xi}}{\xi} + 
\Delta g -\frac{2 \langle \nabla g,x \rangle }{\abs{x}^2}\right)\dot{Q}x + \abs{\nabla g}^2 \ddot{Q}x \\
& + \frac{1 + \xi^2 \abs{\nabla g}^2}{\abs{x}^2} Qx. 
\end{align*}

Now since we have $\Delta g = \Delta_{\xi,x} g  + g_{\xi}/\xi$ where the $\Delta_{\xi,x}$ denotes the Laplacian with respect to the $\xi$ and $x_3$ 
variables we can rewrite this as 
\begin{align}
{\Delta u} + \frac{\abs{\nabla u}^2}{\abs{u}^2} u - \frac{2}{\abs{u}^2}  \nabla u (\nabla u)^tu =& \left(\frac{3g_{\xi}}{\xi} + 
\Delta_{\xi,x} g - \frac{2 \langle \nabla g,x \rangle }{\abs{x}^2}\right)\dot{Q}x + \abs{\nabla g}^2 \ddot{Q}x \nonumber \\
& + \frac{1 + \xi^2 \abs{\nabla g}^2}{\abs{x}^2} Qx. 
\end{align}
Now upon recalling that the desired twist solution satisfies $(\ref{Eulertorustwist})$ we have that
\begin{align*}
{\rm div} \left( \frac{\xi^3 \nabla g}{\xi^2 + x_3^2} \right)  = \frac{\xi^3 \Delta_{\xi,x} g}{\xi^2 + x_3^2} + 
\left( \frac{3\xi^2}{\xi^2 + x_3^2}- \frac{2\xi^4}{(\xi^2+x_3^2)^2}\right)g_{\xi} - \frac{2\xi^3 x_3 g_{x_3}}{(\xi^2 + x_3^2)^2} =0. 
\end{align*}

Thus dividing both sides by $\xi^3/(\xi^2+x_3^2)$ and taking the negative terms to one side gives 
\begin{equation*}
\Delta_{\xi,x}g + \frac{3 g_{\xi}}{\xi} = 2\left( \frac{\xi g_{\xi} + x_3 g_{x_3} }{\abs{x}^2} \right) = 2\frac{\langle \nabla_{\xi,x} g, z\rangle }{\abs{x}^2},
\end{equation*} 
where $z=(\xi,x_3)^t$ and $\nabla_{\xi,x}$ denotes the gradient with respect to the $(\xi, x_3)$ variable. Now since 
$\langle \nabla g, x \rangle = \langle \nabla_{\xi,x}g, z \rangle$ we obtain,
\begin{align*}
\Delta_{\xi,x}g + \frac{3 g_{\xi}}{\xi}  = 2\frac{\langle \nabla g, x\rangle }{\abs{x}^2},
\end{align*}
and so as a result 
\begin{align}
{\Delta u} + \frac{\abs{\nabla u}^2}{\abs{u}^2} u - \frac{2}{\abs{u}^2}  \nabla u (\nabla u)^tu & =  \abs{\nabla g}^2 \ddot{Q}x + 
\frac{1 + \xi^2 \abs{\nabla g}^2}{\abs{x}^2} Qx. 
\end{align}

Next referring to the definition of $Q$ basic calculation gives $\dot{Q} = J_1 Q $ and $\ddot{Q}=-J_2Q$, where
\begin{align}
J_1 = \begin{bmatrix}
0 & -1 & 0 \\
1 & 0 & 0 \\
0 & 0 & 0
\end{bmatrix}, \quad
 J_2 = \begin{bmatrix}
 1 & 0 & 0 \\
 0 & 1 & 0 \\
 0 & 0 & 0
 \end{bmatrix}. 
\end{align}

Hence with the above notation the Euler-Lagrange associated with the twist $u$, satisfying \eqref{Eulertorustwist}, simplifies to 
\begin{align}
\frac{(\nabla u)^t}{\abs{x}^4} (I + \xi^2 \abs{\nabla g}^2 I - \abs{x}^2\abs{\nabla g}^2 J_2) Qx 
& = (I + \xi^2 \abs{\nabla g}^2I - \abs{x}^2\abs{\nabla g}^2 J_2) \frac{x}{\abs{x}^4} \nonumber \\
& = \frac{1}{\abs{x}^4}\begin{bmatrix}
(1-x_3^2\abs{\nabla g}^2 ) x_1 \\
(1-x_3^2\abs{\nabla g}^2) x_2 \\
(1+\xi^2 \abs{\nabla g}^2) x_3 
\end{bmatrix} \nonumber \\
&= \nabla \left( -\frac{1}{2\abs{x}^2}  \right) + \frac{\abs{\nabla g}^2}{\abs{x}^4}\begin{bmatrix}
-x_3^2 x_1 \\
-x_3^2 x_2 \\
\xi^2 x_3 
\end{bmatrix} =\nabla p.
\end{align}

Considering the last line in the above equation it is plain that for $u$ to grant solution to the Euler-Lagrange equation it must be that  
\begin{align}
-\frac{1}{2} \nabla |x|^{-2} + \frac{\abs{\nabla g}^2}{\abs{x}^4} (-x_3^2 x_1, -x_3^2 x_2, \xi^2 x_3)^t =\nabla p,
\label{grad1}
\end{align}
or equivalently that the second term on the left is a gradient. But for this to be the case the latter term must necessarily be curl-free 
and so this leads to the system of equations
\begin{align}
0 & = \frac{\partial }{\partial x_1} \left( -\frac{x_3^2 \abs{\nabla g}^2}{\abs{x}^4}x_2 \right) 
- \frac{\partial }{\partial x_2} \left( -\frac{x_3^2 \abs{\nabla g}^2}{\abs{x}^4}x_1 \right), \label{eq1}\\
0 & = \frac{\partial }{\partial x_1} \left( \frac{\xi^2 \abs{\nabla g}^2}{\abs{x}^4}x_3 \right) 
- \frac{\partial }{\partial x_3} \left( -\frac{x_3^2 \abs{\nabla g}^2}{\abs{x}^4}x_1 \right),\label{eq2}\\
0 & = \frac{\partial }{\partial x_2} \left( \frac{\xi^2 \abs{\nabla g}^2}{\abs{x}^4}x_3 \right) 
- \frac{\partial }{\partial x_3} \left( -\frac{x_3^2 \abs{\nabla g}^2}{\abs{x}^4}x_2 \right). \label{eq3}
\end{align}

It can be easily verified that equation $(\ref{eq1})$ is satisfied for any twist map since here we have 
\begin{align*}
&  \frac{\partial }{\partial x_1} \left( -\frac{x_3^2 \abs{\nabla g}^2}{\abs{x}^4}x_2 \right) 
- \frac{\partial }{\partial x_2} \left( -\frac{x_3^2 \abs{\nabla g}^2}{\abs{x}^4}x_1 \right) \\
&= x_3^2 \abs{\nabla g}^2\left[ \frac{\partial }{\partial x_2} \frac{x_1}{\abs{x}^4} 
- \frac{\partial}{\partial x_1} \frac{x_2}{\abs{x}^4} \right] +  \frac{x_3^2}{\abs{x}^4} 
\left[ x_1\frac{\partial \abs{\nabla g}^2}{\partial x_2} -  x_2\frac{\partial \abs{\nabla g}^2}{\partial x_1}  \right] \\
&= \frac{x_3^2}{\abs{x}^4} \left[ x_1 \frac{x_2}{\xi}\frac{\partial \abs{\nabla g}^2}{\partial \xi} 
-  x_2\frac{x_1}{\xi}\frac{\partial \abs{\nabla g}^2}{\partial \xi}  \right]=0.
\end{align*}
We point out that the last line results upon noting the relations 
\begin{equation}
\frac{\partial }{\partial x_1} = \frac{x_1}{\xi} \frac{\partial}{\partial \xi}-\frac{x_2}{\xi^2}\frac{\partial}{\partial \phi}, \qquad 
\frac{\partial }{\partial x_2} = \frac{x_2}{\xi} \frac{\partial}{\partial \xi}+\frac{x_1}{\xi^2}\frac{\partial}{\partial \phi}.
\end{equation} 
Using this we can again see that we can write $(\ref{eq2})$ and $(\ref{eq3})$ as a single equation in the following way,
\begin{align}
\frac{\partial }{\partial x_1} \left( \frac{\xi^2 \abs{\nabla g}^2}{\abs{x}^4}x_3 \right) 
&+ \frac{\partial }{\partial x_3} \left( \frac{x_3^2 \abs{\nabla g}^2}{\abs{x}^4}x_1 \right) = \nonumber \\
& = \frac{x_1 x_3}{\xi} \frac{\partial}{\partial \xi} \left( \frac{\xi^2 \abs{\nabla g}^2}{\abs{x}^4} \right) 
+ x_1\frac{\partial }{\partial x_3} \left( \frac{x_3^2 \abs{\nabla g}^2}{\abs{x}^4} \right),\\
\frac{\partial }{\partial x_2} \left( \frac{\xi^2 \abs{\nabla g}^2}{\abs{x}^4}x_3 \right) 
&- \frac{\partial }{\partial x_3} \left( -\frac{x_3^2 \abs{\nabla g}^2}{\abs{x}^4}x_2 \right) = \nonumber \\
& = \frac{x_2 x_3}{\xi} \frac{\partial}{\partial \xi} \left( \frac{\xi^2 \abs{\nabla g}^2}{\abs{x}^4} \right) 
+ x_2\frac{\partial }{\partial x_3} \left( \frac{x_3^2 \abs{\nabla g}^2}{\abs{x}^4} \right).
\end{align}
Therefore it is apparent that $(\ref{eq2})$ and $(\ref{eq3})$ become
\begin{align}
x_3 \frac{\partial}{\partial \xi} \left( \frac{\xi^2 \abs{\nabla g}^2}{\abs{x}^4} \right) + \xi\frac{\partial }{\partial x_3} 
\left( \frac{x_3^2 \abs{\nabla g}^2}{\abs{x}^4} \right)=0. \label{eq4}
\end{align}
Now since we have the identities
\begin{align}
\frac{\partial }{\partial \xi} \left( \frac{\xi^2}{\abs{x}^4} \right) = \frac{2\xi (x_3^2 - \xi^2)}{(\xi^2+x_3^2)^3} 
=- \frac{\xi}{x_3} \frac{\partial }{\partial x_3} \left( \frac{x_3^2}{\abs{x}^4} \right),
\end{align}
we obtain that $(\ref{eq4})$ simplifies further to 
\begin{align}
\frac{x_3\xi}{\abs{x}^4} & \left[ \xi \frac{\partial \abs{\nabla g}^2}{\partial \xi} 
x_3\frac{\partial \abs{\nabla g}^2 }{\partial x_3} \right] = 0 \implies
 \xi \frac{\partial \abs{\nabla g}^2}{\partial \xi}  + x_3\frac{\partial \abs{\nabla g}^2 }{\partial x_3} = 0.
\end{align}

Hence for a solution $g$ to $(\ref{Eulertorustwist})$ with the prescribed boundary conditions to furnish a solution 
to the Euler-Lagrange system (\ref{EL}) associated with ${\mathbb F}$ it is necessary for $g$ to satisfy
\begin{align}
\xi \frac{\partial \abs{\nabla g}^2}{\partial \xi}  + x_3\frac{\partial \abs{\nabla g}^2 }{\partial x_3} = 0. \label{gradcond}
\end{align}

We now show that (\ref{gradcond}) is also sufficient. Indeed assuming (\ref{gradcond}) the desired conclusion will 
follow upon showing that $(\ref{grad1})$ holds. Towards this end set $f$ to be the function,
\begin{align}
f(\xi,x_3) = -\int_0^{\xi} \frac{x_3^2\abs{\nabla g}^2}{(\tau^2 +  x_3^2)^2}\tau d\tau.
\end{align}
Then one can easily verify that,
\begin{align}
\frac{\partial f}{\partial x_1} = -\frac{x_3^2 \abs{\nabla g}^2}{\abs{x}^4}x_1, \quad \frac{\partial f}{\partial x_2} 
= -\frac{x_3^2 \abs{\nabla g}^2}{\abs{x}^4}x_2.
\end{align}
Furthermore using $(\ref{eq4})$ it is plain that 
\begin{align}
\frac{\partial f}{\partial x_3} &= - \int_0^{\xi} \frac{\partial }{\partial x_3} 
\frac{x_3^2\abs{\nabla g}^2 \tau}{(\tau^2 +  x_3^2)^2}\, d\tau \nonumber \\
& = \int_0^{\xi} \frac{\partial }{\partial \tau} \frac{\tau^2\abs{\nabla g}^2 x_3}{(\tau^2 +  x_3^2)^2} \, d\tau 
= \frac{\xi^2\abs{\nabla g}^2}{\abs{x}^4} x_3.
\end{align}
As a result $\nabla f = \abs{\nabla g}^2 \abs{x}^{-4} (-x_3^2 x_1, -x_3^2 x_2, \xi^2 x_3)^t$.

\begin{thm} A twist map $u$ with the corresponding angle of rotation function $g=g(\mu, x_3; k)$ satisfying $(\ref{Eulertorustwist})$ 
and $g=0$ on $\partial \B_a$, $g=2\pi k$ on $\partial \B_1$ $($with $k \in \Z$$)$  is a solution to the Euler-Lagrange system 
$(\ref{EL})$ associated with ${\mathbb F}$ on ${\mathcal A}_\phi({\mathbb T})$ iff it satisfies $(\ref{gradcond})$.
\end{thm}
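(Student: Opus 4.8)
The plan is to treat both implications by assembling the computations already carried out in this section, and the argument breaks into three steps: (i) reduce the Euler--Lagrange system \eqref{EL} for the twist $u$ to a single integrability condition on $g$; (ii) derive \eqref{gradcond} from that condition; and (iii) conversely show \eqref{gradcond} suffices, by exhibiting an explicit Lagrange multiplier $p$.

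For step (i) I would substitute the twist ansatz \eqref{twist-solid-torus-equation}--\eqref{Q-matrix-equation} into the reduced system \eqref{EL}, use the identities \eqref{twistcalculations}, \eqref{twistcalculations2} together with $\dot Q = J_1 Q$ and $\ddot Q = -J_2 Q$, and then invoke the divergence equation \eqref{Eulertorustwist} to cancel all first-order terms; as computed above the left-hand side of \eqref{EL} then collapses to $\nabla\bigl(-\tfrac12\abs{x}^{-2}\bigr)+V$, where
\[
V := \frac{\abs{\nabla g}^2}{\abs{x}^4}\bigl(-x_3^2 x_1,\ -x_3^2 x_2,\ \xi^2 x_3\bigr)^{t},\qquad \xi=\mu+\rho.
\]
Thus \eqref{EL} holds for some scalar $p$ on $\mathbb{T}$ exactly when $V$ is a gradient field on $\mathbb{T}$, which is precisely the requirement \eqref{grad1}.

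Then, for necessity: a gradient field is curl-free, so I would write out $\operatorname{curl} V = 0$ componentwise, obtaining \eqref{eq1}--\eqref{eq3}; using the polar formulas for $\partial_{x_1},\partial_{x_2}$ one sees that \eqref{eq1} holds for any twist, while \eqref{eq2} and \eqref{eq3} both collapse to \eqref{eq4}, which---after the identity $\partial_\xi(\xi^2/\abs{x}^4) = -(\xi/x_3)\,\partial_{x_3}(x_3^2/\abs{x}^4)$---simplifies to \eqref{gradcond}. For sufficiency I would \emph{not} argue ``curl-free $\Rightarrow$ gradient'', since $\mathbb{T}$ is not simply connected, but instead produce a primitive directly: assuming \eqref{gradcond}, set $f=f(\mu,x_3)$ to be the axially symmetric function $f(\xi,x_3)=-\int_0^{\xi}\frac{x_3^2\abs{\nabla g}^2\,\tau}{(\tau^2+x_3^2)^{2}}\,d\tau$ introduced above; it is well defined---the integrand is $O(\tau)$ as $\tau\to0$---and smooth on $\mathbb{T}$, and, depending only on $(\mu,x_3)$, it is single-valued there. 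Differentiating under the integral sign yields the $x_1$- and $x_2$-components of $\nabla f$ with no use of the hypothesis, and the $x_3$-component $\partial_{x_3}f = \xi^2\abs{\nabla g}^2 x_3/\abs{x}^4$ follows from \eqref{eq4}, i.e. exactly from \eqref{gradcond}. Hence $V=\nabla f$, and $p := -\tfrac12\abs{x}^{-2}+f$ realises \eqref{EL} for the twist $u$; combining this with step (i) gives the asserted equivalence.

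The step I expect to be the main obstacle is the sufficiency direction: because $\mathbb{T}$ carries nontrivial first cohomology one cannot appeal to ``closed $\Rightarrow$ exact'', so everything rests on the explicit construction of $f$ and the verification of its $x_3$-derivative---the single point at which \eqref{gradcond} is actually used on this side. The care needed there is to confirm convergence of the defining integral near $\tau=0$, that $f$ is genuinely $C^\infty$ on $\mathbb{T}$ (including across $x_3=0$), and that, being a function of $(\mu,x_3)$ alone, it descends to a single-valued function on the non-simply-connected domain---exactly what is required to produce the Lagrange multiplier $p$ in \eqref{EL}.
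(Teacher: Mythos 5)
Your proposal follows essentially the same route as the paper: reduce the Euler--Lagrange system \eqref{EL} for the twist to the gradient condition \eqref{grad1} using \eqref{twistcalculations}, \eqref{twistcalculations2} and the twist equation \eqref{Eulertorustwist}; obtain necessity of \eqref{gradcond} by checking the curl-free conditions \eqref{eq1}--\eqref{eq3} and collapsing \eqref{eq2}, \eqref{eq3} to \eqref{eq4}; and obtain sufficiency by writing down the explicit primitive $f(\xi,x_3)=-\int_0^{\xi}\tfrac{x_3^2\abs{\nabla g}^2\tau}{(\tau^2+x_3^2)^2}\,d\tau$ and verifying $\nabla f=V$, with \eqref{gradcond} entering precisely in the $x_3$-derivative via \eqref{eq4}. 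Your remark that the non-simple-connectedness of $\mathbb{T}$ forbids invoking ``closed $\Rightarrow$ exact'' and necessitates the explicit single-valued construction of $f$ is correct and makes explicit a point the paper handles implicitly by producing $f$ directly.
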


\noindent{ $\dag$ {\scriptsize DEPARTMENT OF MATHEMATICS, UNIVERSITY 
OF SUSSEX, FALMER, BRIGHTON BN1 9RF, ENGLAND, UK.}}    
\\[2 mm]
\noindent{\textit{{\small E-mail address:}} \textrm{{\small 
a.taheri@sussex.ac.uk}}}

\end{document}